\date{}
\newcommand{\Z}{{\mathbb Z}}
\newcommand{\N}{{\mathbb N}}
\newcommand{\emptystring}{\epsilon}
\newcommand{\Aut}{\mathsf{Aut}}
\newcommand{\spine}{\mathsf{spine}}
\newcommand{\coWP}{\mathsf{coWP}}
\newcommand{\WP}{\mathsf{WP}}
\newcommand{\ent}{\mathsf{ent}}
\newcommand{\bs}{\mathfrak{b}}
\newcommand{\ts}{\mathfrak{t}}
\newcommand{\E}{{\rm \sc et0l}\xspace}
\newcommand{\ps}{{\sf p}\xspace}
\newcommand{\ce}{{co-\E}\xspace}
\newcommand{\sce}{{special co-\E}\xspace}
\newcommand{\cspda}{{cspda}\xspace}
\newcommand{\cA}{\mathcal{A}}
\newcommand{\cD}{\mathcal{D}}
\newcommand{\cL}{\mathcal{L}}
\newcommand{\cM}{\mathcal{M}}
\newcommand{\cN}{\mathcal{N}}
\newcommand{\cP}{\mathcal{P}}
\newcommand{\cT}{\mathcal{T}}
\newcommand{\Rev}{\mathsf{R}}
\newcommand{\bp}{\mathbf{p}}
\newcommand{\bq}{\mathbf{q}}
\newcommand{\bx}{\mathbf{x}}
\newcommand{\by}{\mathbf{y}}
\newcommand{\bz}{\mathbf{z}}
\newtheorem{theorem}{Theorem}[section] % 1st argument is your name for it
\newtheorem{lemma}[theorem]{Lemma} % 2nd argument is what is printed
\newenvironment{proofof}[1]{\normalsize {\it Proof of #1}.}{{\hfill $\Box$}}
\newenvironment{mylist}{\begin{list}{}{
\setlength{\parskip}{0mm}
\setlength{\topsep}{2mm}
\setlength{\parsep}{0mm}
\setlength{\itemsep}{0.5mm}
\setlength{\labelwidth}{7mm}
\setlength{\labelsep}{3mm}
\setlength{\itemindent}{0mm}
\setlength{\leftmargin}{12mm}
\setlength{\listparindent}{6mm}
}}{\end{list}}
\title{Groups with \E co-word problem}
\author{Raad Al Kohli, Derek F. Holt, Sarah Rees}
\begin{document}
\maketitle

\begin{abstract}
We study groups whose co-word problems are \E languages, which we call
co-\E groups, using an automaton based model due to van Leeuwen, and recently
studied by Bishop and Elder.  
In particular we prove a number of closure results for the class of groups with
co-word problems in a subclass of `special' \E languages; that class of groups
contains all groups that we know at the time of writing to be co-\E, including
all groups that were proved by Holt and R\"over to be stack groups, and hence
co-indexed.  It includes virtually free groups, bounded automata groups,
and the Higman-Thompson groups, together with groups constructed from those
using finitely generated subgroups, finite extensions, free and direct products,
and by taking the restricted standard wreath product of a co-\E group by a
finitely generated virtually free top group.

\end{abstract}
2010 AMS Subject classification: 20F10, 68Q42, 68Q45.

Keywords: ET0L language, check stack pushdown automaton, indexed language,
word problem, co-word problem.

\section{Introduction}
A finitely generated group is said to be \emph{co-\E} if the complement of
its word problem (i.e.\ its co-word problem) lies in the class of \E languages.
Our aim in this article is to prove that certain types of groups are co-\E, and
to establish some closure results for this class of groups.
More precisely we prove these results for the class of groups with co-word
problem that is a \emph{special} \E language; that class of groups contains all
groups that we know at the time of writing to be co-\E, and it coincides with
the class of all groups that are proved in~\cite{HR} to be co-indexed.
In particular, it includes virtually free groups, bounded automata groups,
and the Higman-Thompson groups, together with groups constructed from those
using finitely generated subgroups, finite extensions, free and direct products,
and by taking the restricted standard wreath product of a co-\E group by a
finitely generated virtually free top group.

Various classes of formal languages of higher complexity than that of
context-free languages have been 
examined as candidates to define natural complexity classes for certain
problems in group theory, since Muller and Schupp's seminal result~\cite{MS}
characterising finitely generated virtually free groups via their context-free
word problems.  Indexed languages, corresponding to indexed grammars and nested
stack automata, entered the arena with Bridson and Gilman's result~\cite{BG}
finding combings for the fundamental groups of all compact 3-manifolds, but the
subclass (within the indexed languages) of \E languages started to be 
studied after results of Ciobanu, Diekert and Elder in~\cite{CDE,CE} showed that
such languages contained solution sets to equations over first free and then
hyperbolic groups.  Holt and R\"over had proved in~\cite{HR} that all
finitely generated bounded automata groups (and so in particular Grigorchuk's 
group of intermediate growth and the Gupta-Sidki group) are co-indexed
(i.e.\ they have co-word problems that are indexed languages), as well as the
Higman-Thompson groups (later proved in~\cite{LS} to be co-context free). At the
same time they had proved various closure properties for a subset of the family
of co-indexed groups, whose members they called stack groups and which contained
all groups that they could prove to be co-indexed. Ciobanu, Elder and Ferov~\cite[Theorem 6.2]{CEF}
then proved that the Grigorchuk group is in fact co-\E. Subsequently Bishop
and Elder~\cite{BE} derived the same result for all finitely generated bounded
automata groups, using an automata-based characterisation of \E languages.  

In this article, motivated by the characterisation of \E languages that was
used in~\cite{BE} (and attributed to van Leeuwen~\cite{vL}) and by closure
results for specific co-\E groups appearing in the thesis work of
Al Kohli~\cite{AlKohli} (it is proved there that the free 
products $\Z^m * \Z^n$ and $\Z^m * G$, for $G$ virtually free, are co-\E), 
we sought to convert the results of~\cite{HR} from indexed to a restricted
class of \E languages. It had been suggested in~\cite{AlKohli}
that the results of~\cite{HR} would carry over to the class
of groups which we shall refer to here as \emph{special} co-\E groups.
Specifically, we have converted the restrictions on nested stack automata that
were imposed on those recognising the co-word problems of the stack groups
of~\cite{HR}, in order to define the classes of special \E languages,
and of special co-\E groups.

We prove in Theorems~\ref{prop:vfree},~\ref{prop:bag}, and~\ref{prop:htg}
that the class of special co-\E groups contains all virtually free groups, all
finitely generated bounded automata groups and the Higman--Thompson groups,
extending to \sce the results of~\cite{MS,BE,HR}.
In Theorem~\ref{prop:clfin} we prove closure of the class of \sce groups
under passing to finitely generated subgroups, finite direct products and
finite extensions, and that the property of being special co-\E is independent
of finite generating set. 
In Theorems~\ref{prop:clwp} and~\ref{prop:clfp} we prove closure under
taking the restricted wreath product with a finitely generated virtually
free group, and under taking free products with finitely many free factors.

We are grateful to Murray Elder for some very helpful conversations about the
\cspda approach to \E languages, and in particular for the formulation of
Property (P6) in our definition of a special \cspda below.
We are also grateful to the referee for several helpful suggestions,
including a clearer enumeration of the assumed properties of a special \cspda,
which have enabled us to improve the clarity of the proofs.

\section{Definitions and assumptions}
Our definition of a \emph{check stack pushdown automaton}
(which we shall abbreviate as \cspda) is basically taken from~\cite{BE}.
Automata with the same name were also introduced by van Leeuven in~\cite{vL},
with a slightly different definition.  
However, it is proved in~\cite{vL}, and also in an Appendix to~\cite{BE} that
is attached to the unpublished version~\cite{BE2}, that
the class of languages that are recognisable by the \cspda defined in the
respective article is precisely the class of \E languages. So these definitions
are equivalent, and we will use the one from~\cite{BE}.

We define a \cspda $\cM$  to be a non-deterministic finite-state automaton with a
one-way input tape and a read write head, which has access to both a check
stack (with stack alphabet $\Delta$) and a pushdown stack (with stack alphabet
$\Gamma$), where access to these two stacks is tied in a very particular way,
as we explain below.
Throughout this article we restrict attention to \cspda 
in which a number of extra properties, (P1)--(P7), hold. 
We call such a \cspda
a \emph{special} \cspda, and the language accepted by such a \cspda a
\emph{special} \E language. 
We define the properties (P1)--(P7) below.

The execution of any \cspda (special or otherwise) is separated into two stages.

In the first stage the machine is allowed to push to its check stack but not
to its pushdown stack, and further, the machine is not allowed to read from its
input tape.  Since the words over $\Delta$ that can be pushed to the check
stack are controlled by a finite state automaton, the language consisting of
all such words is regular.
We shall refer to this first stage as the \emph{initialisation} of the \cspda.

In the second stage, 
the machine can read from the input tape and from its check stack 
(but no longer alter the check stack), and can push to and pop from the pushdown stack.
Movement along the check stack is tied to movement along the pushdown stack, so that every
time the machine pushes a value onto the pushdown stack its read write head
moves up the check stack, and every time the machine pops a value off the pushdown
stack the head moves down the check stack.  
So, throughout the second stage of
the computation, the read write head can see the top symbol of the pushdown
stack and the symbol in the corresponding position on the check stack 
(if the head is below the top of the check stack).

A \cspda  accepts its input word if it enters an accepting state at some
stage after reading all of its input.

By definition, a special \cspda must possess properties (P1)--(P7) below.
As we shall see, some of these properties can
be achieved by modifying the original \cspda without changing its language,
but others, including (P2), (P4) and (P6), are likely to impose restrictions on
the class of accepted languages.  We shall now list and discuss these assumed
properties.

\begin{enumerate}
\item[(P1)] Every state of the automaton is of one of the following two
types:
\begin{itemize}
\item every out transition from that state is reading, in which case we say
that the state is \emph{reading}; or
\item every out transition from that state is non-reading, in which case we
say that the state is \emph{non-reading}.
\end{itemize}

It is clear that, by introducing new states if necessary, we can modify any
\cspda to define one with the same language that satisfies this property.

\item[(P2)] The automaton is \emph{deterministic upon input}. More precisely,
the operation of the \cspda throughout the second stage is deterministic.

\item[(P3)] 
The check and pushdown stack alphabets $\Delta$ and $\Gamma$
both contain a single \emph{bottom of stack} symbol and single \emph{padding} symbol, which we call $\bs$ and $\ps$
(or, if necessary, $\bs_{\cM}$ and $\ps_{\cM}$,  or $\bs_G$ and $\ps_G$, for a group $G$),   
and $\Delta$ contains a single \emph{top of stack} symbol $\ts$ (or $\ts_{\cM}$ or $\ts_G$). After the
initialisation, the check stack has $\bs$ at the bottom and $ \ts$ at the top.
The first transition of the second stage is to write $\bs$  at the bottom of
 the pushdown
stack, after which the automaton moves into a reading state.
We assume further that:
\begin{itemize}
\item the symbol $\bs$ occurs only at the bottom of the two stacks,
and
$\ts$ occurs only at the top of the check stack;
\item the symbol $\bs$ cannot be popped;
\item when the symbol $\ts$ is read from the check stack, no symbol can be pushed onto
 the pushdown stack, that is, the read write head cannot move above the top of the check stack.
\end{itemize}
This property can be achieved by modifying an arbitrary \cspda without
changing its accepted language. %This modification is straightforward apart from
In order to achieve the last of the three extra conditions, an arbitrary number of padding symbols $\ps$ are written at the top of the check stack during the non-deterministic initialisation stage, before $\ts$ is written; whether or
not a word is accepted in a particular computation may depend on the length of that string
of padding symbols. The other modifications are straightforward.

\item[(P4)] It is not possible for there to be an infinitely-long sequence of
non-reading transitions in any computation of the automaton.

\item[(P5)] All accepting states of the automaton are reading states.

Any \cspda that satisfies (P4) can be modified to satisfy (P5) without 
changing the accepted language.
\end{enumerate}

Suppose now that our \cspda, $\cM$, possesses properties (P1)--(P5).
Property (P4) allows us to define a configuration of %the \cspda to be 
$\cM$ to be
\begin{itemize}
\item \emph{reading} if it is at a reading state;
\item \emph{accepting} if it is at an accepting state
(which is also reading, by (P5)); and
\item \emph{failing} if it is a non-accepting reading configuration from which it is not
possible to reach an accepting configuration.
\end{itemize}

We shall use the term \emph{entry configuration} to denote a configuration
that arises immediately after the
first move of the second stage in which we write $\bs$ to the pushdown stack,
and we shall call its associated state an \emph{entry state}.
Note that these are reading configurations by (P3).
By contrast a state and configuration of $\cM$ %the \cspda 
at the beginning of
stage 1 are called an \emph{initial state} and \emph{initial configuration}.
We shall assume that %a \cspda 
$\cM$ has a single initial state.

Given a reading configuration $C$ of the automaton, and a word $w$ over the
input alphabet, we define $C^w$ to be the machine's configuration as it enters
its first reading state after reading all of $w$, or a
\emph{failing configuration} if it enters a failing configuration
without reaching such a reading state. In the second case we say that
$C^w$ \emph{fails}. Note that $C^w$ is well-defined, by (P2) and (P4).

The next assumed property is that %the \cspda 
$\cM$ \emph{ignores} subwords
of input words that are not in its language $\cL(\cM)$, or more precisely:

\begin{enumerate}
\item[(P6)]
Let $C$ be an entry configuration of $\cM$, and $u,v \in A^*$ with
$v \notin \cL(\cM)$. Then either $C^{uv} = C^u$, or $C^{uv}$ fails;
In particular (under the same assumptions), either $C^v=C$ or $C^v$ fails.
\end{enumerate}

The final statement of (P6) is derived from the previous one by setting $u=\emptystring$.
It follows from that final statement %Property (P6) 
that if $D$ is any reading non-failing non-entry configuration, then for any
entry configuration $C$ and any $v$ for which $C^v=D$, we have $v \in \cL(\cM)$.
Hence whether or not $D$ is an accepting configuration does not affect the
language $\cL(\cM)$,
and hence, when (P1)--(P6) hold we may assume that every such configuration $D$ is accepting, 
and hence that all non-failing non-entry reading states of $\cM$ are accepting. 
We note also  that if a state of $\cM$ is accepting, then by (P5) it is reading, and by definition it is non-failing and non-entry.

We have shown that if (P1)--(P6) hold for $\cM$ then the following property may be 
assumed (that is, that $\cM$ may be modified to a machine accepting the same language in which it holds). But we find it 
convenient to identify it as a separate property.

\begin{enumerate}
\item[(P7)] The set of accepting states of $\cM$ is equal to the set of reading
states that are not entry states.
\end{enumerate}

In this article, we are interested in \cspda accepting co-word problems of
finitely generated groups $G$. So we shall assume from now on that the input
alphabet of a \cspda under consideration consists of a (finite) inverse-closed 
generating set $A$ of such a group $G$, and that $A \subseteq \Delta$ and
$A \subseteq \Gamma$.  We denote the word and co-word problems of $G$ with
respect to $A$ by $\WP(G,A) \coloneqq \{w \in A^* \mid w =_G 1 \}$ and
$\coWP(G,A)\coloneqq \{w \in A^* \mid w \ne_G 1 \}$, respectively.

We call a finitely generated group $G$ a \emph{\sce group} if, for some finite
inverse-closed generating set $A$ of $G$, there is a special \cspda with
input alphabet $A$ that accepts the language $\coWP(G,A)$.
We shall prove in Theorem~\ref{prop:clfin} below that the property 
of being a \sce group does not depend on the choice of the generating set $A$.
We note that Property (P6), which is required for the \cspda  when $G$ is a
\sce group, is precisely what was referred
to as \emph{ignoring the word problem} of $G$ in~\cite{HR}.
We also note that every \sce group is a stack group (in the sense of~\cite{HR}).

The following lemma is useful for avoiding the failure of $C^{uv}$ in the
situation described in Property (P6). It will be used in the proof of
closure of the class of \sce groups under free products in
Theorem~\ref{prop:clfp} below.

\begin{lemma}\label{lem:nonfail}
Let $G = \langle A \rangle$ be a group, $\cM$  a special \cspda with
$\cL(\cM) = \coWP(G,A)$, $w \in \cL(\cM)$, and $n \in \Z$ with $n>0$.
Then there is an entry configuration $C$ of $\cM$ for which 
$C^w$ is accepting, and such that $C^v$ does not fail for any word $v$ over $A$
of length at most $n$.
\end{lemma}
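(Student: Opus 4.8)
The plan is to exploit the padding symbol $\ps$ in the check-stack alphabet $\Delta$ to build an entry configuration whose check stack is so long that the read/write head, during any computation on a short input word, never reaches the top of the check stack, so that Property~(2) can never trigger a failure. First I would fix an entry configuration $C_0$ of $\cM$ for which $C_0^w$ is accepting; such a configuration exists because $w \in \cL(\cM) = \coWP(G,A)$ and every word in the language is accepted from some entry configuration. The entry configuration $C_0$ is reached at the start of stage~2, so its pushdown stack is empty and its check stack is some word over $\Delta$ of some length, say $\ell$.

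The key construction is to produce a new entry configuration $C$ that agrees with $C_0$ on the state and on the pushdown stack (both empty, or whatever $C_0$ prescribes) but whose check stack is $C_0$'s check stack with a block of $N$ extra padding symbols $\ps$ inserted at the top, where $N$ is chosen large relative to $n$ and to the structure of $\cM$. Because the set of check-stack contents pushable during stage~1 is regular and we have assumed $\ps \in \Delta$ can be used freely for padding (exactly the mechanism described after Property~(2)), such a configuration $C$ is a legitimate entry configuration of $\cM$, provided we first, if necessary, replace $\cM$ by an equivalent special \cspda in which arbitrarily many $\ps$ symbols may be appended at the top of the check stack at the end of stage~1. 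The point is that during stage~2 the head moves up the check stack by exactly one cell each time a symbol is pushed onto the pushdown stack, and down by one each time a symbol is popped; hence after reading a word of length $m$ the head has moved by at most $B\cdot m$ cells from the bottom, where $B$ bounds the number of pushdown operations $\cM$ performs while processing a single input letter together with the associated non-reading moves (finite, using Property~(0) which forbids infinite non-reading paths). Taking $N \ge B\cdot n + 1$ (adding, if one wishes, a further margin for $|w|$), the head can never climb above the top of the original check stack $C_0$, let alone above the new top, while reading any word of length at most $n$; consequently on such inputs the padded block behaves identically to the padding cells that would already be consulted, Property~(2) is never invoked, and so $C^v$ does not fail for any $v$ with $|v|\le n$. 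In particular $C^w = C_0^w$ is still accepting, since $w$ is read without the head ever reaching the top.

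The main obstacle, and the step requiring the most care, is verifying that inserting the padding block genuinely does not change the computation below the insertion point: one must check that the transitions of $\cM$ read the check-stack symbol at the head position, and that for head positions at most $\ell + N$ but at most $\ell$ above the bottom the symbols seen are exactly those seen in the $C_0$ computation, so the runs of $\cM$ from $C$ and from $C_0$ are in lockstep as long as the head stays in the lower region. This is where the precise tying of the two stacks in the \cspda model is used: since the head position on the check stack is determined entirely by the current pushdown-stack height, two configurations with the same state, same pushdown contents, and check stacks agreeing on all cells up to the current head position have identical futures until the pushdown height increases enough to expose a differing cell. Bounding that height by $B\cdot n$ over inputs of length $\le n$, and choosing $N$ beyond this bound, closes the argument. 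A minor bookkeeping point is that the entry states other than the original ones may need to be declared accepting or not as in the remark following Property~(3); but since we only ever claim $C^w$ is accepting and $C^v$ does not fail (not that $C^v$ is accepting), this causes no difficulty.
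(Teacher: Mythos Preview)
Your approach has a genuine gap. You assume that the only way $C^v$ can fail is via Property~(2), i.e.\ by the head attempting to climb above the top of the check stack, so that padding the check stack with enough $\ps$ symbols prevents failure. But the definition of a special \cspda does not say this: the fail state is simply a non-accepting sink, and transitions into it may be triggered by any combination of state and stack symbols. The Higman--Thompson construction in Section~\ref{sec:htg} is an explicit counterexample: there the machine fails when the word $\bx^\beta$ currently on the pushdown stack is too short to contain a prefix from the antichain $B$, and piling $\ps$ symbols above $\bx$ on the check stack does nothing to cure that. There is a second, independent problem with the constant $B$: Property~(0) rules out infinite non-reading paths, but gives no uniform bound, independent of the configuration, on the number of stack operations between successive reading moves. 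In the bounded-automata machine of Section~\ref{sec:bag}, for example, the pushdown stack is emptied and refilled after each input letter, so the number of pushes per letter is proportional to the check-stack height and not bounded by any fixed $B$. Finally, the lemma asks for an entry configuration of the \emph{given} $\cM$; replacing $\cM$ by a modified machine would weaken the conclusion in a way that matters for its application in Theorem~\ref{prop:clfp}, where entry configurations of $\cM_H$ and $\cM_K$ are used verbatim as check substacks of $\cM_G$.

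The paper's argument avoids all of this by using Property~(3) rather than padding. One lists all words $v_1,\ldots,v_N$ of length at most $n$, notes that $w' := v_1v_1^{-1}\cdots v_Nv_N^{-1}w$ still lies in $\coWP(G,A)$, and chooses an entry configuration $C$ with $C^{w'}$ accepting. Every prefix of $w'$ then yields a non-failing configuration, and Property~(3) applied with $u=\emptystring$ and $v=v_1v_1^{-1}\cdots v_iv_i^{-1}$ gives $C^{v_{i+1}} = C^{v_1v_1^{-1}\cdots v_iv_i^{-1}v_{i+1}}$, which is such a prefix configuration and hence does not fail; the same reasoning gives $C^w = C^{w'}$ accepting.
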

\begin{proof}
Let $v_1,v_2,\ldots,v_N$ be an enumeration of all words over $A$ of length at
most $n$. Then the word 
$w'\coloneqq v_1v_1^{-1}v_2v_2^{-1}\cdots v_Nv_N^{-1}w$ lies in
$\coWP(G,A)$, and so there is an entry configuration $C$
such that the state of $C^{w'}$ is accepting.
It follows that for any prefix $w''$ of $w'$, $C^{w''}$ does not fail.

For any $i$ with $0 \le i < N$, it follows from the second statement of Property (P6) 
of a special \cspda (applied with $v=v_1v_1^{-1} \cdots v_iv_i^{-1}$)
that $C^{v_1v_1^{-1}\cdots v_iv_i^{-1}}=C$
and hence that $C^{v_1v_1^{-1}\cdots v_iv_i^{-1}v_{i+1}}=C^{v_{i+1}}$.
So $C^{v_{i+1}}$ does not fail.

Similarly, we deduce from Property (P6) that 
$C^{v_1v_1^{-1}\cdots v_Nv_N^{-1}}=C$ and hence $C^{w'}=C^{w}$.
So $C^{w}$ is accepting, as required.
\end{proof}

\section{Virtually free groups}\label{sec:vfree}
Theorem~\ref{prop:vfree}, which we prove in this section, is the special case
of Theorem~\ref{prop:clwp} in which the group $H$ is trivial. But we prefer to
prove Theorem~\ref{prop:vfree} first, firstly because it is more
straightforward, and secondly because the technique used in the proof
is applied again in the proof of Theorem~\ref{prop:clwp}.
We observe in addition that, since the Thompson group $V$ is known to contain
all virtually free groups as subgroups (see, for example,~\cite[Lemma 1.1,
Theorem 1.4, Corollary 1.3]{BSD}), this result could also be deduced from
Theorems~\ref{prop:htg} and~\ref{prop:clfin}.

We note that in the proof of Theorem~\ref{prop:vfree}, and also in the proofs
of Theorems~\ref{prop:bag} and~\ref{prop:htg}, we refer forwards to and
use Theorem~\ref{prop:clfin}. The proof of that result is elementary, and does
not depend on earlier results in this article, and we have chosen to defer its
proof to Section~\ref{sec:clprops}, which is devoted to closure properties
of the class of \sce groups.

\begin{theorem}\label{prop:vfree}
Every finitely generated virtually free group is a \sce group.
\end{theorem}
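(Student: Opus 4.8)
The plan is to recognise $\coWP(G,A)$, for a convenient finite inverse-closed generating set $A$ of $G$, by a special \cspda. A \cspda whose check stack carries nothing but copies of the padding symbol $\ps$ is, during its second stage, just an ordinary pushdown automaton that ignores its check stack, so the problem reduces to building a \emph{deterministic} pushdown automaton $\cD$ reading words over $A$ with two features: (i) the configuration of $\cD$ after reading a word $w$ depends only on the element $\overline w\in G$ represented by $w$; and (ii) $\cD$ is in an accepting configuration exactly when that element is not $1$. Given such a $\cD$, Property~(3) comes for free: if $v=_G 1$ then $\overline{uv}=\overline u$, so after $uv$ the machine is in the same configuration as after $u$, i.e. $C^{uv}=C^u$; and by (ii) a word $w$ is accepted precisely when $\overline w\neq 1$, i.e. when $w\in\coWP(G,A)$.

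To build $\cD$ I would use the standard geometry of virtually free groups: such a $G$ is the fundamental group of a finite graph of finite groups, hence acts on a simplicial tree $T$ with finite vertex and edge stabilisers and finite quotient; fix a base vertex $v_0$. On the pushdown stack the machine will keep an encoding of the reduced edge-path in $T$ from $v_0$ to $\overline w\,v_0$ --- this can be done over a finite stack alphabet of ``edge-and-turn'' symbols because there are finitely many $G$-orbits of edges and each vertex group is finite --- and in its finite control it will keep the residual element of the (finite) stabiliser of the endpoint $\overline w\,v_0$, the two pieces of data being arranged so that together they depend only on $\overline w$ and are trivial exactly when $\overline w=1$. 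Because $A$ is finite, each generator moves $v_0$ a bounded distance in $T$, so $gv_0$ and $\overline{ga}\,v_0$ are a bounded distance apart for every $a\in A$; as $T$ is a tree, the path from $v_0$ to $\overline{ga}\,v_0$ agrees with the path to $gv_0$ except on a suffix of bounded length. Hence reading one more generator requires $\cD$ only to inspect a bounded number of top stack symbols and then perform a bounded, deterministic surgery there, which a pushdown automaton can do. (Alternatively one can quote the deterministic form of the Muller--Schupp theorem for a deterministic pushdown automaton accepting $\WP(G,A)$ and then refine it to one whose configuration is a function of $\overline w$.)

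It remains to wrap $\cD$ into a special \cspda. In the initialisation stage the machine merely pushes a nondeterministically chosen number of padding symbols onto the check stack; in the second stage it runs $\cD$, consulting only the pushdown stack, save that an attempt to move above the top of the padding throws it into the unique non-accepting fail state. This secures Property~(2) without altering the language: every input word $w$ has an entry configuration with enough padding, while no quantity of padding can cause a word with $\overline w=1$ to be accepted. Property~(1) holds since the second stage begins with an empty pushdown stack, in a reading state, at the bottom of the stacks, and then runs deterministically; Property~(0) holds since reading a single input letter provokes only the bounded stack surgery above (finitely many non-reading moves) before $\cD$ is back in a reading state, and the end-of-input test --- does the current data represent $1\in G$? --- is read off the bounded contents of the finite control. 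Finally, a non-entry reading configuration reached after a nonempty input $w$ has $\overline w\neq 1$ and so is accepting, as the remark after Property~(3) predicts.

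The step I expect to cost the most is the construction of $\cD$ in the second paragraph: turning the Muller--Schupp pushdown automaton into a deterministic one whose configuration is literally a function of $\overline w$, and trivial exactly for $\overline w=1$. The fiddly points are the finite encoding of reduced paths in the infinite tree $T$, the bookkeeping of the residual vertex-stabiliser element at the moving endpoint, and the verification that multiplication by a generator really does disturb only a bounded suffix of this encoding in a way realisable by a deterministic pushdown automaton. Should closure under finite extension (Theorem~\ref{prop:clfin}) be available independently of this theorem, a shorter route is to prove the claim for finitely generated free groups --- where the pushdown stack just stores the freely reduced form of $w$ over a fixed free basis, so the configuration is manifestly a function of $\overline w$ --- and then invoke Nielsen--Schreier together with that closure result.
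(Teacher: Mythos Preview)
Your proposal is correct, and the overall shape --- a deterministic pushdown automaton whose configuration is a function of the group element, padded out with a check stack of $\ps$'s to enforce Property~(2) --- is exactly the paper's shape. The difference is in how that pushdown automaton is built. You go via Bass--Serre theory, encoding the reduced path in the tree together with a residual vertex-stabiliser element; the paper instead takes what you flag at the end as the ``shorter route'': it fixes a free subgroup $F$ of finite index, chooses the generating set $A=B\cup T^{\pm 1}$ with $B$ the inverse closure of a free basis of $F$ and $T$ a transversal, and stores on the stack the unique normal form $wt$ (freely reduced $w\in B^*$, then $t\in T\cup\{\emptystring\}$ on top). This is exactly the deterministic Muller--Schupp automaton, tweaked to accept by empty stack; the configuration is then visibly a function of the group element, Property~(3) is immediate, and flipping accept/reject gives $\coWP(G,A)$. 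The paper does forward-reference Theorem~\ref{prop:clfin}, but only for independence of generating set, not for closure under finite extension --- the transversal bookkeeping is done by hand inside the single automaton.

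What each approach buys: your Bass--Serre construction is more conceptual and would adapt to other groups acting on trees with finite stabilisers, but the ``fiddly points'' you identify (encoding turns, tracking the stabiliser coset, bounding the surgery) are real and need care. The paper's approach sidesteps all of that: the normal form $wt$ is already a word over the generating alphabet, so there is no encoding to invent and the free-reduction update rule is the obvious one. Since Theorem~\ref{prop:clfin} is in fact proved independently of this theorem, your suggested shortcut would also go through cleanly and is essentially what the paper does, just with the finite-extension step absorbed into the automaton rather than invoked as a separate closure result.
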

We note that the fact that finitely generated virtually free groups are
\ce groups follows immediately from the inclusion of the context-free languages
in \E (proved in~\cite[Theorems 2.4,2.7]{KRS}) and the fact that virtually
free groups have deterministic context-free word problem, and hence
context-free co-word problem.
But here we  prove additionally that these groups are \sce groups, by constructing a special \cspda.
\begin{proof}
Let $G$ be finitely generated and virtually free.
We shall prove in Theorem~\ref{prop:clfin} below that the property of
being a \sce group does not depend on the choice of the finite generating set
of the group $G$.
We choose a generating set $A= B \cup T^{\pm 1}$ of $G$ in which
$B$ is the inverse closure of a free generating set of a free subgroup $F$ of
finite index in $G$ and $T$ is a right transversal of $F$ in $G$ with
the element in $F$ removed. Then every element of $G$ can be written
uniquely as a word $wt$ with $w$ a freely reduced word in $B^*$ and
$t \in T \cup \{\emptystring\}$, where $\emptystring$ denotes the
empty word. It was shown in~\cite{MS} that the word problem $\WP(G,A)$ is the
language of a deterministic pushdown automaton with states corresponding to
$T \cup \{\emptystring\}$, that stores the freely reduced word $w$ on its stack.
We can amend that to a pushdown automaton $\cP_G$ that accepts when its stack
is empty, by storing $t$ (if non-empty) on the stack above $w$.

We convert $\cP_G$ to a \cspda by attaching a check stack onto which we write
a string $\bs \ps^k \ts$ at the start, for some arbitrary integer $k>0$.
It has just two states, the entry state, which is entered if and only if
$\bs$ is seen on the two stacks, and an accepting state.
After the initialisation and writing $\bs$ on the pushdown stack, the \cspda
operates as $\cP_G$, additionally reporting failure if  
(P3) is violated by an attempt to push a symbol onto the pushdown stack above $\ts$.
The new automaton accepts any word that $\cP_G$ rejects, provided that $k$ has been chosen to be large enough,
and rejects if $\cP_G$ accepts (or fails if $k$ is too small).

It is straightforward to verify that this \cspda is special with language
$\coWP(G,A)$.
Note that failure can only arise as a result of inadequate initialisation
of the check stack, and this can be remedied without otherwise affecting its
operation by pushing more \ps symbols onto the check stack.
\end{proof}

\section{Bounded automata groups}\label{sec:bag}
This section is devoted to the proof of the following result, which strengthens
the main result of~\cite{BE}; its proof is based on the proof of that result and
the proof of the corresponding result in~\cite{HR} on which it is based.

\begin{theorem}\label{prop:bag}
Every finitely generated bounded automata group is a \sce group.
\end{theorem}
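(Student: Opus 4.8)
The plan is to follow the template of the proof in~\cite{HR} that bounded automata groups are co-indexed, and the refinement of it in~\cite{BE} that shows they are co-\E, but to carry along the extra bookkeeping needed to verify Properties (0)--(3) for the resulting \cspda. Recall that a finitely generated bounded automata group $G$ acts on a rooted $d$-ary tree, and each generator acts as a \emph{bounded} automaton: the set of tree vertices at which the state of the automaton is non-trivial (the \emph{activity}) grows only boundedly with the level. The key structural fact, used in both~\cite{HR} and~\cite{BE}, is that a word $w$ in the generators represents a non-trivial element of $G$ if and only if either its action is non-trivial already at the root or on some level-$1$ subtree, or it fixes level $1$ and its section at some child is a non-trivial word; because activity is bounded, along any root-to-leaf path there are only boundedly many levels at which a ``branching'' witness can occur. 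One verifies non-triviality by guessing such a path down the tree, recording on the pushdown stack the sequence of sections of $w$ (portmanteau words, in the terminology of~\cite{HR}) restricted along that path, and checking at the bottom that the resulting section acts non-trivially on some bounded-depth subtree.

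First I would set up the \cspda $\cM$ so that its check stack, written during the initialisation stage, holds a string encoding a descent path in the tree of some length $k$ (padded with $\ps$ to whatever length is convenient), together with the ``profile'' data that~\cite{HR} attach to such a path; the choice of $k$ and of the path is exactly the nondeterminism that the initialisation stage is allowed. Then, in the second (reading) stage, $\cM$ reads the input word $w = a_1 a_2 \cdots a_m$ one letter at a time and maintains on the pushdown stack the current section words along the guessed path, exactly as the nested stack automaton of~\cite{HR} does, but organised so that a push corresponds to descending one level (moving the head up the check stack) and a pop to ascending; this is where the check-stack bound from the initialisation is used, and where Property~(2) (head never moves above the top of the check stack, else fail) is enforced, using the padding symbol $\ps$ exactly as described in the remark after Property~(2). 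Upon consuming all of $w$, $\cM$ inspects the section at the bottom of the guessed path and accepts iff that section witnesses non-triviality on a bounded-depth subtree. To get determinism upon input (Property~(1)) I would push all the genuine nondeterminism into the initialisation stage: once the path and profile are fixed on the check stack, the update of the pushdown-stack section words under reading a single generator is a deterministic finite-state-controlled operation, so the second stage is deterministic as required. Property~(0) (no infinite non-reading paths) is immediate since each maximal block of non-reading moves between two input letters is bounded in length by the (fixed, finite) path length $k$ recorded on the check stack.

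The step I expect to be the main obstacle is verifying Property~(3): that $\cM$ \emph{ignores} subwords not in $\coWP(G,A)$, i.e.\ that inserting a subword $v$ with $v =_G 1$ between $u$ and the rest of the input either leaves the entry configuration's image unchanged ($C^{uv}=C^u$) or makes it fail. For this I would exploit the fact that, along the guessed path, the section of a trivial word $v$ is again a trivial word (sections of the identity are the identity), so that the portmanteau/section data recorded on the pushdown stack after processing $uv$ is literally unchanged from that after processing $u$ --- provided the check stack was initialised deep enough that no failure of Property~(2) was provoked while processing $v$. Where the initialisation was not deep enough, $\cM$ fails; and as in the virtually-free case (Theorem~\ref{prop:vfree}) this failure can always be cured by pushing more $\ps$ symbols during initialisation, which is exactly the content needed to make the final clause of Property~(3) non-vacuous together with Lemma~\ref{lem:nonfail}. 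The delicate point is that ``the section of $v$ along the guessed path is trivial'' must be tracked at the level of the \emph{configuration} of $\cM$ and not merely up to equality in $G$; so I would carry an invariant asserting that the pushdown-stack contents after processing any prefix of the input are a deterministic function of the guessed check-stack data together with the sequence of sections of that prefix along the path, and check that this invariant, combined with the fact that trivial words have trivial sections, forces $C^{uv}=C^u$ whenever no $\ps$-overflow occurs. Finally I would note, as in Theorem~\ref{prop:vfree}, that the reduction of the accept condition to ``$\cP$-type automaton does not accept'' together with the remark after Property~(3) lets us take all non-entry reading states to be accepting, completing the verification that $\cM$ is a special \cspda for $G$.
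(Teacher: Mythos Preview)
Your plan follows the nested-stack construction of~\cite{HR} too literally, and in doing so misses the point of the~\cite{BE} construction that the paper actually adapts. The paper does \emph{not} store ``section words'' or ``portmanteau words'' on the pushdown stack; it stores the \emph{image string} $\bx^\beta \in \Sigma^*$ of the guessed check-stack word $\bx$ under the group element represented by the prefix $\beta$ read so far. This is crucial for two reasons. First, $\bx^\beta$ has fixed length $|\bx|$, so it fits in a pushdown stack whose height is tied to the check-stack height; by contrast, the portmanteau word of $\beta$ along a path has length $|\beta|$ and cannot be stored in a \cspda whose pushdown is bounded by the check stack. Second, $\bx^\beta$ depends only on the group element $\beta$ represents, so Property~(3) is immediate: if $v=_G 1$ then $\bx^{uv}=\bx^u$, and the configuration is literally unchanged. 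Your claimed invariant (``the portmanteau/section data recorded on the pushdown stack after processing $uv$ is literally unchanged from that after processing $u$'') is false for portmanteau words --- the section of a trivial $v$ is trivial as a group element but not as a word, so the recorded word strictly grows. Relatedly, the paper's machine \emph{never} fails, so the appeal to Lemma~\ref{lem:nonfail} and to $\ps$-overflow is unnecessary here.

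You also omit the main technical step that makes the~\cite{BE} construction deterministic, which is the paper's actual contribution for this theorem. For a directed generator $\delta$ with spine $\bp\bq^\omega$, applying $\delta$ to a word with prefix $\bp\bq^k$ requires pushing $k$ copies of $\bq'^\Rev$ onto the pushdown stack for an \emph{a priori} unknown $k$; in~\cite{BE} this is done by a nondeterministic guess, which violates Property~(1). The paper's fix is to push $\bq'^\Rev$ repeatedly until fewer than $|\bp|+|\bq|$ cells remain below the top of the check stack, then push $\bp'^\Rev$ --- a deterministic procedure that uses the check stack as a ruler. Finally, since a \cspda has no end-of-input marker, the paper compares the two stacks after \emph{every} input symbol (popping matched symbols, entering accepting state $q_2$ if a mismatch is found, else the entry state $q_1$, and reconstituting the pushdown stack from the check stack before processing the next letter); your ``upon consuming all of $w$, $\cM$ inspects the section'' does not implement this.
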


Bounded automata groups were introduced by Sidki in~\cite{Sidki}.
The best known examples are the Grigorchuk group and the Gupta--Sidki groups,
and more recently studied examples include the basilica groups~\cite{CZ}.
Our definition below describes them as groups of permutations of sets of
infinite strings, which are generated by finite state transducers
(i.e.\ automata with input and output).
Finitely generated bounded automata groups were proved to be
co-indexed in~\cite{HR}, and then to be \ce in~\cite{BE}.

For $d \geq 2$, we consider the set $\Sigma^*$ of strings over the finite set
$\Sigma$ of $d$ elements, and write $\epsilon$ for the empty string. 
There is a natural bijection between $\Sigma^*$ and the set of vertices of the
$d$-regular rooted tree $\cT_\Sigma$, in which the $d$ edges down from any
given vertex are labelled by the elements of $\Sigma$; similarly there is a
natural bijection between $\Sigma^\omega$ and the set of ends of $\cT_\Sigma$.
We consider the group of all permutations of $\Sigma^*$ that induce
automorphisms of $\cT_\Sigma$, which we shall denote here by $\cA_\Sigma$
(it is denoted by $\Aut(\cT_d)$ in~\cite{BE}); such a permutation $\alpha$ must
preserve `levels' in the tree, and hence for any $k$ it permutes the set of subsets of
strings with common prefixes of length $k$; that is the subsets
$\bx \Sigma^*$ of $\Sigma^*$ with $\bx \in \Sigma^k$.  In addition it must have
the property that, for any $\bx \in \Sigma^*$ and,
$\by \in \Sigma^* \cup \Sigma^\omega$, the string $(\bx\by)^\alpha$ has
$\bx^\alpha$ as a prefix, and the restriction $\alpha|_\bx$ of $\alpha$ to
$\bx$ is defined by $(\bx\by)^\alpha = \bx^\alpha \by^{\alpha|_\bx}$.

An automorphism $\alpha \in \cA_\Sigma$ is called an
\emph{automaton automorphism} if there is an associated finite state transducer
$\cN_\alpha$ such that for any $\bx\in \Sigma^*$ the image $\bx^\alpha$ of
$\bx$ is equal to the output string $\bx^{\cN_\alpha}$ of the transducer when
$\bx$ is input.
An automorphism is called \emph{bounded} if for some $N \in \N$ and every
$k \in \N$ there are at most $N$ strings $\bx \in \Sigma^k$ with
$\alpha|_\bx \ne 1$.  The set of all bounded automaton automorphisms forms a
subgroup of $\cA_\Sigma$, which we shall call $\cD_\Sigma$.
A \emph{bounded automata group} is defined to be
a subgroup of $\cD_\Sigma$, for some finite set $\Sigma$.

An automorphism $\phi\in \cA_\Sigma$ is called \emph{finitary} if, for some
$k_\phi \in \N$ and all $\bx$ with $|\bx|\geq k_\phi$, the restriction
$\phi|_\bx$ is equal to the identity.

We refer the reader to~\cite{BE} for the precise definition of a \emph{directed
automaton automorphism}; such an automorphism $\delta$ has an associated
\emph{spine}, an element of $\Sigma^\omega$, with the property that
$\delta|_\by$ is finitary for any $\by$ that is not a prefix of
$\spine(\delta)$. 
It is proved in~\cite[Lemma 1]{BE} that the spine of any directed automaton
automorphism is eventually periodic, that is, of the form $\bp\bq^\omega$,
for fixed strings $\bp,\bq \in \Sigma^*$,
and that $(\bp\bq^\omega)^\delta = \bp' \bq'^\omega$, for fixed strings
$\bp',\bq' \in \Sigma^*$ that have the same lengths as $\bp$ and $\bq$.
Furthermore, $\bp$ and $\bq$ can be chosen such that,
if $\bq=b_1\cdots b_t$ with $b_i \in \Sigma$, then
$\delta|_{\bp \bq^k b_1 \cdots b_j} = \delta|_{\bp  b_1 \cdots b_j}$ for
any $k \ge 0$ and $0 \le j < t$.

Our proof of Theorem~\ref{prop:bag} (below) is a modification of the proof
of~\cite{BE}, which itself emulates the proof in~\cite{HR}.
We shall describe a construction that is
basically the construction of~\cite{BE}, with some modification to fit the
constraints that we have imposed on a \cspda to make it special.
We shall not repeat proofs of properties of the \cspda that are already proved 
in~\cite{BE}.

We note that in~\cite{BE} bounded automata groups are described as groups of 
automorphisms of rooted trees, but we prefer to view them as described above,
in terms of their associated actions on sets of finite strings as well as the
sets of infinite strings that correspond to the ends of the trees.
We also note that the entry states in our \cspda are analogous to states of the
nested stack automaton constructed in~\cite{HR} that are called
{\em start states} in that article. 

\begin{proofof}{Theorem~\ref{prop:bag}}
Let $G$ be a subgroup of $\cD_\Sigma$ with finite generating set $X$.
It is proved in~\cite[Proposition~16]{HR} that $\cD_\Sigma$ is generated by 
the infinite subset of all of its finitary and directed automaton automorphisms. 
Hence the subgroup $G$ of $\cD_\Sigma$ 
is a finitely generated subgroup of a group 
with finite generating set $Y$, each of whose elements is either finitary
or a directed automaton automorphism. 
It will follow from
Theorem~\ref{prop:clfin} below that $G$ is a \sce group
provided that the subgroup $\langle Y \rangle$ of $\cD_\Sigma$ is \sce.
Hence, as in~\cite{BE}, we may assume that $G$ itself has such a
generating set $Y$.

The proof in~\cite{BE} that $G$ is \ce defines a \cspda that provides a 3-stage
process to verify non-deterministically that a given element $\alpha \in G$ is
non-trivial.  In Stage 1, an arbitrary string $\bx=x_1\cdots x_m \in \Sigma^*$
is put onto the check stack, with $x_1$ as top symbol. The image $\bx^\alpha$ is
computed in Stage 2, which is then compared with $\bx$ in Stage 3 and, 
if $\bx \neq \bx^\alpha$, then $\alpha$ is proved non-trivial.
If $\alpha$ is non-trivial, then there must be some such word $\bx$ with
$\bx \ne \bx^\alpha$, and so the language of the \cspda is equal to
$\coWP(G,Y)$, as required.

Stage 1 of the \cspda that we define here is essentially the same as
in~\cite{BE}, with $x_1$ as the top symbol on the check stack, but to
satisfy Property (P3) we also write $\ts$ on the check stack above $x_1$.
In the construction of~\cite{BE}, $\bx$ is put immediately onto the pushdown
stack at the end of Stage 1 but, in our modification, to ensure that (P3)
is satisfied, we defer copying the contents of the check stack onto the
pushdown stack until after we have entered an entry configuration of the
\cspda and then have read the first letter in the input word.

During the second stage of the process described in~\cite{BE}, the contents of
the pushdown stack are modified so that it contains $\bx^\alpha$ by the end of
that stage; successive images of $\bx$ under the generators in the string
$\alpha$ are computed, as explained below.
During the third stage of that process, the contents of the two stacks are
compared with each other, to see if they differ.

Our Stage 2 process is similar to that of~\cite{BE}, but it does a little more
than in~\cite{BE}. For, in order to maintain determinism after Stage 1,
we do not have a Stage 3,
but do our checking within Stage 2; the checking will be explained later.
In our Stage 2, we have just two reading states. The first is the unique
entry state and is non-accepting, and the second is an accepting state;
we shall call those two states $q_1$ and $q_2$, respectively.
As required by (P5), all non-reading states are non-accepting.

In our description of Stage 2,
we denote the current image $\bx^\beta$ of $\bx$ under the prefix $\beta$ of the
input word that has been read so far by $\by=y_1y_2 \cdots y_m$. We need to
explain how we compute the image of $\by$ under finitary and directed
automorphisms in Stages 2a and 2b, respectively.

Stage 2a of the process described in~\cite{BE} computes the
image of $\by$ under a finitary element.
Since this involves changing only a bounded number of symbols at the top of
the pushdown stack, this is easily done deterministically using finite memory,
and we shall not give the details here; they can be found in~\cite{BE}.

Stage 2b of the process described in~\cite{BE} computes the image of $\by$
under a directed automaton automorphism $\delta$.
Let $\bz =y_1\cdots y_\ell$ 
be the maximal prefix of $\by$ that is also a prefix of $\spine(\delta)$.
We shall assume that $\ell<m$; the case $\ell=m$ (where $\bz=\by$) is an easy
modification of the case $\ell<m$.  As we saw above, we have
$(\bp\bq^\omega)^\delta = \bp' \bq'^\omega$, for fixed strings
$\bp=a_1\cdots a_s,\bq=b_1\cdots b_t$,
$\bp'=a'_1\cdots a'_s,\bq'=b'_1\cdots b'_t \in \Sigma^*$.

Then, either $\ell \le s$, in which case $\bz=a_1\cdots a_\ell$ and
\[\by^\delta= a'_1\cdots a'_{\ell}(y_{\ell+1})^{\delta|_\bz}(y_{\ell+2}
	\cdots y_{\ell_\phi})^\phi y_{\ell_\phi+1} \cdots y_m; \]
or $\ell > s$ and, for some $k \ge 0$ and $j$ with $0 \le j < t$, we have
$\ell=s+kt+j$, $\bz=\bp\bq^kb_1\cdots b_j$ and
\[\by^\delta = \bp' \bq'^kb'_1\cdots b'_j
	(y_{\ell+1})^{\delta|_\bz}(y_{\ell+2}\cdots y_{\ell_\phi})^\phi
  y_{\ell_\phi+1} \cdots y_m, \]
where in both cases $\phi$ is the finitary automorphism
$\delta|_{\bz y_{\ell+1}}$ with associated bound $k_\phi$ on the length
of the prefix of words that it moves,
and $\ell_\phi \coloneqq \min(\ell+1+k_\phi,m)$.

We need to pop the prefix of $\by$ of length $\ell_\phi$ from the stack.
Then we compute its image under $\delta$ and then push that (or rather its
reverse, which we denote by a superscript $\Rev$) onto the stack.

The case where $\ell\leq s$ is straightforward. The computation and storage of  
$y_{\ell+1}^{\delta_\bz}$ and $(y_{\ell+2} \cdots y_{\ell_\phi})^\phi$
need a bounded amount of memory, and so can be 
done deterministically using non-reading states and transitions of the \cspda.
Then we push
$((y_{\ell +2}\cdots y_{\ell_\phi})^\phi)^\Rev$ 
onto the pushdown stack followed by
$(y_{\ell+1})^{\delta|_\bz}a'_\ell\cdots a'_1$.

The case where $\ell\geq s$ is more complicated.
As explained above, it is proved in~\cite[Lemma 1]{BE} that
$\delta|_\bz = \delta|_{\bp \bq^k b_1 \cdots b_j}$ does not depend on $k$,
and so $\delta|_\bz$ and the finitary automorphism $\phi$
depend only on $\bp$, $\bq$ and $j$. So the computation and storage of the
components of $\by^\delta$, namely $\bp'$, $\bq'$, 
$y_{\ell+1}^{\delta_\bz}$ and $(y_{\ell+2} \cdots y_{\ell_\phi})^\phi$,
is again straightforward, and needs only a bounded amount of memory. 
As above, we can then push
$((y_{\ell +2}\cdots y_{\ell_\phi})^\phi)^\Rev$ 
onto the pushdown stack followed by
$(y_{\ell+1})^{\delta|_\bz}b'_j\cdots b'_1$
But then we need to push $k$ copies of $\bq'^\Rev$ followed by $\bp'^\Rev$
without knowing the value of $k$ in advance, and $k$ could take any non-negative
integer value. This computation cannot be achieved using bounded memory.

In~\cite{BE}, this is achieved non-deterministically by guessing the value
of $k$, checking whether the heights of the two stacks are the same at the
end of the process, and failing if they are not.

But we can achieve this deterministically by repeatedly pushing $\bq'^\Rev$
until there are fewer than $s+t$ cells between the
top of the pushdown stack and the top of the check stack (which can be tested
by making the read write head move up while pushing padding symbols onto the
pushdown stack), and at that stage pushing $\bp'^\Rev$.  At the end of this
deterministic process we have $\by^\delta$ on the pushdown stack.

In the process described in~\cite{BE}, the \cspda decides after processing each
input symbol either to read another input symbol, or to proceed to Stage 3
and look for no further input. If the machine moves to 
Stage 3 prematurely, then the end of the input word is never reached, and 
so that word is not accepted.
If the machine looks for input after reading the end of the word, then it must also fail. The machine can only accept the word if it makes that transition at
the right moment and finds that the contents of the two stacks are distinct.
Clearly this part of the process of~\cite{BE} is non-deterministic.

In our \cspda, in order to maintain determinism, we compare the contents of 
the two stacks during our Stage 2 repeatedly,
after we have processed each input symbol. We do this by popping symbols off 
the pushdown stack until we reach a point at which the two stacks differ; 
the first symbol of the pushdown stack that is not matched is then pushed back 
onto the stack.  If the two stacks are distinct, then we move into the single
accepting reading state $q_2$, but if they match we return to the single
non-accepting reading (and  entry) state $q_1$.
In the event of no further input that state will be the final state.  If there
is further input then we read it, and then we copy across from the check stack
to the pushdown stack to fill those cells at the top of the pushdown stack
that we have just popped, so that both stacks have the same height as before,
and contain the same contents as they did just before this checking process.
We then proceed to process the new input symbol as already described above.

It is proved in~\cite{BE} that the machine described there is a \cspda that 
accepts an input word precisely when it is in $\coWP(G,A)$.
It should be clear that our modified \cspda accepts the same language as does 
the \cspda of~\cite{BE}, and that our modifications ensure that properties
(P1)--(P5) and (P7) hold for our \cspda.

It remains to consider Property (P6). 
Since our machine never fails, we only need to check (P6)\,(a),
that for any words $u \in A^*, v \in \WP(G,A)$ the configurations 
$C^u$ and $C^{uv}$ are equal. 
These configurations are determined by the state of the machine
and the contents of the pushdown stack.
At any stage the pushdown stack must contain the image of the string on the 
check stack under the action of the element represented by the input
string that has been read so far.
Since $u,uv$ represent the same group element, the pushdown stack contents in the two configurations must match, as required.
And the state within each configuration must be either $q_1$ or $q_2$,
depending on whether or not that single element is equal to the identity.
\end{proofof}

\section{Higman--Thompson groups}\label{sec:htg}

The Higman--Thompson groups $G_{n,r}$ were described by Higman
in~\cite{Higman} as generalisations of one of Thompson's finitely presented
infinite simple groups, namely the group
$V$ described in~\cite{Thompson}, which is isomorphic to $G_{2,1}$, and which
first appeared in a handwritten manuscript of Richard J. Thompson circa 1965.
They were proved to be co-indexed in~\cite{HR} and the stronger result
that they are co-context-free is proved in~\cite{LS}.
In this section we extend the proof of~\cite{HR} to derive the following result.

\begin{theorem}\label{prop:htg}
The Higman--Thompson groups $G_{n,r}$ are \sce groups.
\end{theorem}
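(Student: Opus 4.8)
The plan is to emulate the strategy of \cite{HR}, which showed the Higman--Thompson groups $G_{n,r}$ are co-indexed, but to refit that construction into a \emph{special} \cspda exactly as was done for bounded automata groups in the proof of Theorem~\ref{prop:bag}. Recall that $G_{n,r}$ acts faithfully on the set of (finite) strings over an $n$-letter alphabet $\Sigma$, with $r$ roots; an element $g \in G_{n,r}$ is given by a pair of prefix codes $(D, R)$ together with a bijection between them, and $g$ acts on a string $\bx$ by locating the unique element $\bd$ of $D$ that is a prefix of $\bx$ (writing $\bx = \bd\by$), and returning $\bx^g = \bd' \by$ where $\bd'$ is the image of $\bd$ under the bijection. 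Crucially, for a fixed finite generating set $Y$ of $G_{n,r}$ there is a uniform bound $M$ on the lengths of all strings appearing in the prefix codes of the generators, so the action of a single generator on $\bx$ modifies only a bounded-length prefix of $\bx$ (the bound growing by at most a constant with each generator applied, but only the top of the stack is ever touched). As in \cite{HR}, non-triviality of $g$ is witnessed by exhibiting a string $\bx$ with $\bx^g \neq \bx$, so the language of the machine we build is $\coWP(G_{n,r}, Y)$.

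First I would set up Stage 1: nondeterministically write an arbitrary string $\bx = x_1 \cdots x_m \in \Sigma^*$ onto the check stack (with $x_1$ on top), padded above with $\ps$ symbols as in the proof of Theorem~\ref{prop:vfree} to guarantee Property~(2); this is a regular language over $\Delta$, so it is a legitimate initialisation. Then, exactly as in the \cspda for bounded automata groups, I would \emph{defer} copying $\bx$ from the check stack to the pushdown stack until after entering the single entry (reading, non-accepting) state $q_1$ and reading the first input letter, so that Property~(1) — empty pushdown stack when the first input symbol is read — holds. Stage 2 processes the input word one generator at a time: maintaining the current image $\by = \bx^\beta$ (where $\beta$ is the prefix of input read so far) on the pushdown stack, to apply the next generator $g$ I pop the top $\le M$ symbols, identify which domain-code word $\bd$ is a prefix (this needs only bounded memory and can be done with non-reading states and transitions), compute $\bd'$, and push $\bd'$ back followed by whatever suffix symbols were popped but not consumed — all with bounded memory, hence deterministically. (The Higman--Thompson case is in fact \emph{simpler} than the bounded-automata case here: there is no spine, no unbounded repetition of a block $\bq'^{\Rev}$, so the awkward ``push $k$ copies'' manoeuvre of the Theorem~\ref{prop:bag} proof does not arise — only a bounded top segment of the pushdown stack is ever rewritten.) After processing each input symbol I do the in-line comparison of the two stacks exactly as in the Theorem~\ref{prop:bag} proof: pop from the pushdown stack matching against the check stack until a mismatch is found (or one stack is exhausted), move to the accepting reading state $q_2$ if they differ and back to $q_1$ if they agree, and, if there is further input, refill the popped cells by copying from the check stack before continuing.

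Once the machine is described, I would verify Properties (0)--(3). Property~(0) holds because every non-reading computation segment — applying one generator plus the comparison sweep — is of bounded length (bounded by $m$ plus a constant), so there are no infinite non-reading paths. Property~(1) holds by the deferred-copy device and because every Stage~2 transition is deterministic. Property~(2) holds by the $\ps$-padding, with the caveat — as noted for virtually free groups — that the only way failure can occur is through an inadequately tall initial check stack, which does not affect the accepted language since one can always initialise with more $\ps$ symbols. For Property~(3): the machine (apart from that benign initialisation failure) never fails, so it suffices to check (3)(a), that $C^u = C^{uv}$ whenever $v \in \WP(G_{n,r}, A)$; but a configuration here is determined by its state ($q_1$ or $q_2$) together with the pushdown stack contents, the pushdown stack always holds the image of the fixed check-stack string $\bx$ under the group element read so far, and since $u$ and $uv$ name the same element of $G_{n,r}$ those images coincide — hence the configurations are equal. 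By Theorem~\ref{prop:clfin} the property is independent of the generating set, so $G_{n,r}$ is a \sce-group.

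I expect the main obstacle to be purely bookkeeping: confirming that the generator-application step genuinely touches only a bounded-length prefix of the pushdown stack in \emph{all} cases, including when the current image $\by$ is shorter than the relevant domain-code word $\bd$ (so that $\bd$ is a prefix of $\by$ extended by reading down the check stack), and — more subtly — that the interleaved stack-comparison sweeps restore the pushdown stack \emph{exactly}, so that the determinism needed for Property~(1) and the configuration-equality needed for Property~(3) are not broken by any off-by-one in the refill. None of this is conceptually hard; it is the same refitting already carried out in detail for Theorem~\ref{prop:bag}, and I would largely cite \cite{HR} and the proof of Theorem~\ref{prop:bag} for the routine verifications rather than repeat them.
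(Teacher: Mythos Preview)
Your proposal is correct and follows essentially the same route as the paper's proof, which likewise transplants the \cite{HR} construction into the special-\cspda framework along the lines of Theorem~\ref{prop:bag}, with the same deferred-copy device, the same two reading states $q_1,q_2$, and the same in-line comparison after each generator. The one point you underplay is that prefix replacement in $G_{n,r}$ can change the \emph{length} of the word on the pushdown stack --- the paper singles this out as ``the main difference'' from the bounded-automata case --- so the comparison step must first detect a length mismatch (the paper does this via the root set $Q$: it checks whether the check-stack symbol aligned with the current top of the pushdown stack lies in $Q$) before attempting the pop-and-match sweep, and the reconstitution and failure bookkeeping must accommodate this; you should make that explicit rather than folding it into ``one stack is exhausted''.
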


Note that, as finitely generated subgroups of the group $V \cong G_{2,1}$,
the Thompson groups $F$ and $T$ are proved special \ce by the combination of
Theorem~\ref{prop:htg} and Theorem~\ref{prop:clfin} below.

There are several equivalent definitions of the groups $G_{n,r}$, and the one
we use here is taken from~\cite[Section~1.10.11]{HRR}, and is also used
in~\cite[Section~3]{HR}.  Our proof that these groups are \sce is based on the
proof in~\cite[Theorem~1]{HR} that they are stack groups, and is similar to our
proof of Theorem~\ref{prop:bag}.

Fix integers $r \ge 1$ and $n \ge 2$, and let
$Q \coloneqq \{q_1, q_2,\ldots,q_r\}$ and $\Sigma \coloneqq
\{\sigma_1, \sigma_ 2, \ldots, \sigma_n\}$ be disjoint sets of
cardinality $r$ and $n$ respectively. Define $\Omega$ to be the set of all
infinite strings of the form
$\omega = q_i \sigma_{i_1} \sigma_{i_2} \sigma_{i_3} \cdots$,
where $q_i \in Q$ and $\sigma_{i_j} \in \Sigma$ for $j \ge 1$.
A non-empty prefix of $\omega$ is a finite string (or word)
$q_i \sigma_{i_1} \cdots\sigma_{i_k}$ for some $k \ge 0$.  Roughly speaking
$G_{n,r}$ is the group which acts on $\Omega$ by replacing non-empty prefixes;
we give more detail below.

Observe that $Q \Sigma^*$ is the set of non-empty prefixes of strings in
$\Omega$.
We define a \emph{finite complete antichain} in $\Omega$ to be a finite subset
$B$ of $Q \Sigma^*$ so that each $\omega \in \Omega$ has precisely one element
of $B$ as prefix. For example, when $r=1$ and $n=2$,
$\{q_1 \sigma_1, q_1 \sigma_2 \sigma_1, q_1 \sigma_2 \sigma_2\}$ is a finite
complete antichain, 
but $\{q_1 \sigma_1,\, q_1 \sigma_1 \sigma_1,\,\allowbreak
q_1 \sigma_2 \sigma_1,\,\allowbreak q_1 \sigma_2 \sigma_2 \}$ is not.

A bijection $\varphi \colon B \to C$ between two finite complete antichains
induces a permutation of $\Omega$, also called $\varphi$, defined by
$\omega^\varphi \coloneqq b^\varphi w'$, where $\omega = b\omega'$ and $b$
is the unique prefix of $\omega$ that lies in $B$.  The group $G_{n,r}$ is
defined to be the group of all such permutations of $\Omega$. We also define
$\bx^\varphi \coloneqq b^\varphi \omega'$ for any prefix $\bx = b \omega'$ of
an element of $\Omega$ that has $b \in B$ as a prefix.

\begin{proofof}{Theorem~\ref{prop:htg}}
This proof is very similar to that of Theorem~\ref{prop:bag} and the
computation proceeds by making prefix replacements to a word stored on
the pushdown stack. The main difference is that prefixes may be replaced by
words of a different length. As in the earlier proof, there are just two
reading states $q_1$ and $q_2$, which are respectively non-accepting and
accepting, where the state is $q_1$ when the pushdown stack contains only $\bs$,
and $q_2$ otherwise. All non-reading states are non-accepting (as required by
(P5)).

It is shown in~\cite{Higman} that $G_{n,r}$ is finitely presented. So we fix a
finite inverse-closed set generating $A$ of $G_{n,r}$. Then there is a bound $K$
such that all finite complete antichains $B$ and $C$ occurring in the elements
$(B, C, \varphi)$ of $A$ have length at most $K$. 

In its first stage, the \cspda with language $\coWP(G,A)$ puts an arbitrary
word $\bx$ in $Q\Sigma^*$ on the check stack, with its first symbol $q_i$ at
the top, and then puts an arbitrary number of padding symbols on top of that
followed by $\ts$.
The read write head then moves to the bottom of the stacks, and the \cspda
enters its deterministic second stage, puts $\bs$ on its pushdown stack, and
goes into its non-accepting reading state $q_1$.
After the first letter (if any) of the input word $\alpha$ has been read, 
the word $\bx$ is copied onto the pushdown
stack. Thereafter, the \cspda works by replacing $\bx$ by $\bx^\beta$, where
$\beta$ is the prefix of $\alpha$ that has been read so far.

The \cspda attempts to establish that the input word $\alpha$ does not fix
some string in $\Omega$ with prefix $\bx$, which would prove that it lies in
$\coWP(G,A)$. To do that, it attempts to compute the image $\bx^\alpha$
of $\bx$ under $\alpha$, and then to check whether $\bx^\alpha$ is distinct from 
$\bx$, which would prove that $\alpha \in \coWP(G,A)$.
If $\alpha$ does lie in $\coWP(G,A)$, then there exists a string in
$\Omega$ that it does not fix and, provided that a sufficiently long prefix
$\bx$ of this string together with enough padding symbols are put on the check
stack in the first stage, the \cspda will succeed in showing that
$w \notin \coWP(G,A)$.

After reading the first input letter and copying $\bx$ onto the pushdown stack,
and also after reading all subsequent input letters $x = (B,C,\varphi) \in A$
and reconstituting the pushdown stack if necessary as explained below,
the \cspda looks for a prefix of the word $\bx^\beta$ that is currently on the
pushdown stack that lies in the set $B$. It can do this by examining the
prefixes of $\bx^\beta$ of length at most $K$. If $|\bx^\beta| < K$
then it could fail to find such a prefix, in which case the computation fails.
This means that the word $\bx$ was not long enough.
Otherwise a unique prefix $b \in B$ is identified, and this is replaced
by $b^\varphi$ on the top of the pushdown stack. If $b^\varphi$ is
longer than $b$, then this replacement might not be possible without going above
the top of the check stack, in which case the computation fails. This occurs
when not enough padding symbols were put at the top of the check stack.
Otherwise, after making the replacement, the read write head is located at the
top of the pushdown stack, and sees the first letter of the new word
$\bx^{\beta x}$.  Since the processed prefix $\beta$ of $\alpha$ has now been
replaced by $\beta x$, the word on the pushdown stack is now equal
to $\bx^{\beta x}$, as required.

As in the proof of Theorem~\ref{prop:bag}, before entering a reading state,
the \cspda checks whether the words $\bx^\beta$ and $\bx$ on the stacks are
equal. If they have different lengths or, equivalently, if the letter that
the read write head sees on the check stack does not lie in the set $Q$,
then the words cannot be equal, and we cannot have $\beta =_G 1$.
To see this note that, for example, the infinite string
$\bx\sigma_1\sigma_2\sigma_1\sigma_2^2\sigma_1\sigma_2^3\cdots$
is not fixed by $\beta$.
In this case the \cspda moves immediately into the accepting state $q_2$.

Otherwise, as in the proof of Theorem~\ref{prop:bag},
the \cspda checks the words for equality by emptying the pushdown stack of
letters that are the same as the corresponding letters on the check stack.
It then goes into state $q_1$ 
if the pushdown stack contains only $\bs$, and into $q_2$ otherwise. 
After reading the next input letter if any, if the pushdown stack %is empty, 
contains only $\bs$,
or if the symbol currently at its top  does not lie in $Q$,
then the \cspda reconstitutes the word $\bx^\beta$ on the pushdown stack by
copying letters from the check stack.

Checking that the required properties of a \cspda hold for the automaton that
we have defined is straightforward, and similar to the proof in
Theorem~\ref{prop:bag}. 
\end{proofof}

\section{Closure properties of \sce groups}
\label{sec:clprops}
Our proofs of the closure properties in this section closely follow the proofs
of corresponding properties for stack groups in~\cite[Sections~7 and~8]{HR}.

\begin{theorem}\label{prop:clfin}
The property of being a \sce group is independent of the choice of finite
generating set. Furthermore,
the class of \sce groups is closed under passing to finitely generated
subgroups, finite direct products, and finite extensions.
\end{theorem}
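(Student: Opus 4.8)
The plan is to prove the four assertions in the order: (i) closure under finitely generated subgroups; (ii) independence of the generating set; (iii) closure under finite direct products; (iv) closure under finite extensions. All four follow the corresponding arguments in \cite[Sections~7--8]{HR} for stack groups, but each step requires checking that Properties (0)--(3) of a special \cspda are preserved by the construction, and in particular that Property (3) (ignoring the word problem) survives.

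For (i), let $H \le G$ with $G$ a \sce-group via a special \cspda $\cM$ with input alphabet $A$ and $\cL(\cM) = \coWP(G,A)$. Pick a finite inverse-closed generating set $B$ of $H$; each $b \in B$ is a word $w_b$ over $A$. Build $\cM'$ with input alphabet $B$ that, on reading $b$, simulates $\cM$ on the block $w_b$ (i.e.\ replaces the entry configuration $C$ by $C^{w_b}$, using non-reading moves for the intermediate letters of $w_b$). Then for $v \in B^*$ we have $C^v$ in $\cM'$ equals $C^{\tilde v}$ in $\cM$, where $\tilde v \in A^*$ is $v$ with each letter expanded; and $\tilde v \in \WP(G,A)$ iff $v \in \WP(H,B)$, so $\cL(\cM') = \coWP(H,B)$. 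Properties (0)--(2) are immediate; for Property (3), if $v \notin \coWP(H,B)$ then $\tilde v \in \WP(G,A)$, so by Property (3) of $\cM$ we get $C^{u\tilde v} = C^{\tilde u}$ in $\cM$ (when it does not fail), which translates back to $C^{uv}=C^u$ or failure in $\cM'$.

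For (ii), given two finite inverse-closed generating sets $A$, $A'$ of $G$ with $G$ a \sce-group over $A$: this is the special case $H = G$ of (i), taking $B = A'$ and expanding each generator in $A'$ as a word over $A$. For (iii), $G_1 \times G_2$ with generating set $A_1 \sqcup A_2$: an input word $v$ over $A_1 \sqcup A_2$ has image the identity in $G_1 \times G_2$ iff its $A_1$-part lies in $\WP(G_1,A_1)$ and its $A_2$-part lies in $\WP(G_2,A_2)$. Run the two special \cspda $\cM_1, \cM_2$ "side by side" — sharing one check stack split into two tracks (one track of padding symbols for each factor), and interleaving the two pushdown-stack computations on the single pushdown stack, which is feasible because the computation is deterministic and one can dedicate disjoint portions of the stack alphabet $\Gamma$ to each factor. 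Accept iff at least one of $\cM_1,\cM_2$ is in an accepting state. Again Properties (0)--(2) are routine; Property (3) holds because if $v$ represents the identity in the product then each projection represents the identity in its factor, so each simulated machine ignores it (or fails), and since neither fails the combined configuration is unchanged. For (iv), the standard device is to realise a finite extension of a \sce-group inside a (restricted) wreath product with a finite group, or more directly: if $G$ has a normal \sce subgroup $N$ of finite index, choose a generating set $A$ of $G$ of the form $B \cup T$ with $B$ an inverse-closed generating set of $N$ and $T$ a transversal, so that every word over $A$ can be rewritten (using a finite-state transducer built from the multiplication table of $G/N$ and a choice of coset representatives) into a word in $B^*$ followed by a single transversal element, the group element being trivial iff the transversal element is trivial and the $B^*$-part lies in $\WP(N,B)$. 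The \cspda for $G$ carries the current coset in its finite-state control and feeds the rewritten $B$-word into the special \cspda for $N$; accept iff the coset is nontrivial or the $N$-machine accepts. Checking Properties (0)--(3) is, once more, a matter of tracking that the rewriting is finite-state and deterministic and that a word representing the identity in $G$ rewrites to a word representing the identity in $N$, so Property (3) is inherited.

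I expect the main obstacle to be the bookkeeping for Property (3) in the finite-extension case (iv): the rewriting transducer changes which letters of the input correspond to which stack operations, and one must verify carefully that a subword $v$ of the input that represents the identity of $G$ — but whose own rewritten form need not be "closed" (the coset may change across $v$ and change back) — still gets ignored by the $N$-machine. The resolution is that since $v$ represents the identity of $G$, it maps to the identity coset, so the rewritten $B$-word contributed by $v$ (with the ambient coset restored) represents the identity of $N$; feeding this into the special \cspda for $N$, Property (3) of that machine gives $C^{uv}=C^u$ up to possible failure, and one argues failure does not occur here by the same padding-is-always-extendable observation used in Theorems~\ref{prop:vfree} and~\ref{prop:bag} (or, where failure is genuinely possible, by invoking Lemma~\ref{lem:nonfail}). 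The other three parts are essentially routine simulations once the stack-track and stack-alphabet partitioning conventions are fixed.
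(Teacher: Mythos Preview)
Your treatment of (i), (ii) and (iv) is essentially the paper's: for subgroups, expand each $b\in B$ as a fixed word $w_b$ over $A$ and simulate $\cM_G$ block by block; independence of generating set is the special case $H=G$; for finite extensions, carry the current coset $t$ in the finite state and, on reading $a$, feed the rewritten word $w_{t,a}$ into $\cM_H$ while updating $t\mapsto t_a$. (The paper does not assume the finite-index subgroup is normal, but your reduction via (i) makes that harmless.) Your worry about Property~(3) in (iv) is also resolved as in the paper: since $v=_G 1$ forces the coset to return and the $B$-word fed to $\cM_H$ to lie in $\WP(H,B)$, Property~(3) for $\cM_H$ gives $C^{uv}=C^u$ or failure, and failure is already allowed by the conclusion of Property~(3), so no appeal to Lemma~\ref{lem:nonfail} is needed here.

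The genuine gap is in (iii). The paper does \emph{not} run $\cM_H$ and $\cM_K$ side by side; it makes a single nondeterministic $\epsilon$-move from the initial state into the initial state of either $\cM_H$ or $\cM_K$, and thereafter simulates only that one machine, silently discarding input letters from the other alphabet. This uses only the nondeterminism already permitted in a special \cspda and makes Properties (0)--(3) immediate from those of the chosen factor. Your proposed simultaneous simulation, by contrast, does not work as stated: in a \cspda the read write head's position on the check stack is rigidly tied to the height of the pushdown stack, so two interleaved machines would be forced to keep their virtual pushdown heights equal at all times. Splitting $\Gamma$ (or the check stack) into two tracks does not help, because once $\cM_1$'s virtual stack is strictly shorter than $\cM_2$'s you cannot reach $\cM_1$'s top symbol without popping, and thereby destroying, $\cM_2$'s content above it; determinism of the second phase does nothing to resolve this storage problem. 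In effect you are asking for two independent unbounded pushdown stacks, which a \cspda does not provide. Replace this step with the paper's nondeterministic-choice construction and the proof goes through.
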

\begin{proof}
Let $G=\langle A \rangle$ be a \sce group and $H = \langle B \rangle \le G$ with $B$ finite
and closed under inversion. For each $b \in B$, we let $w_b$ be a specific word
in $A^*$ with $b =_G w_b$. The special \cspda $\cM_H$ accepting $\coWP(H,B)$
is based on the special \cspda $\cM_G$ for $\coWP(G,A)$ but with input alphabet
$B$ and with some extra states and non-reading moves. On reading a letter in
$B$ in its deterministic second phase, the special \cspda $\cM_H$ behaves in
the same way as $\cM_G$ behaves on reading the word $w_B$. Note that by
choosing $H=G$ this also proves the final assertion in the theorem.

Now let $G = H \times K$ with $H$ and $K$ \sce groups, where $\coWP(H,A_H)$ and
$\coWP(K,A_K)$ are accepted by special \cspda $\cM_H$ and $\cM_K$.
We construct a special \cspda $\cM_G$ accepting $\coWP(G,A_H \cup A_K)$ as
follows. It starts with an $\epsilon$-move into the initial state of either
$\cM_H$ or $\cM_K$, and thereafter behaves like $\cM_H$ or $\cM_K$ respectively.
In the first case it ignores all input letters from $A_K$, and in the second
case it ignores those from $A_H$.

Now suppose that $H$ is a \sce group with $\coWP(H,B)$ accepted by the
special \cspda $\cM_H$, and that $H \le G$ with $|G:H|$ finite, and let $A$ be
an inverse-closed generating set of $G$ with $B \subseteq A$. Let $T$ be a
right transversal of $H$ in $G$ with $1_G \in T$. For each $t\in T$ and
$a \in A$, we have $Hta = Ht_a$ for some $t_a \in T$, and we let $w_{t,a}$ be
a fixed word in $B^*$ with $ta =_G w_{t,a}t_a$.

The special \cspda $\cM_G$ accepting $\coWP(G,A)$ is based on $\cM_H$, but its
states are pairs $(q,t)$, where $q$ is a state of $\cM_H$ and
$t \in T$. The initial state is $(q_0,1_G)$, where $q_0$ is the
initial state of $\cM_H$. The first phase of $\cM_G$ is the same as the first
phase of $\cM_H$, with the first component of the states changing as in
$\cM_H$ and the second component remaining $1_G$. In the second phase,
when $\cM_G$ reads the letter $a \in A$ in state $(q,t)$, it behaves as
though $\cM_H$ has read the word $w_{t,a}$ with the first state component
changing accordingly, and the second state component changes to $t_a$.
The \cspda $\cM_G$ accepts an input word $w$ if and only if, in the next reading
state $(q,t)$ after reading $w$, either $t \ne 1_G$ or $q$ is
an accept state of $\cM_H$.
\end{proof}

\begin{theorem}\label{prop:clwp}
The (standard) restricted wreath product $H \wr K$ of a special co-\E group
$H$ by a finitely generated virtually free top group $K$ is a \sce group.
\end{theorem}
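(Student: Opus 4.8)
The plan is to construct a \cspda recognising $\coWP(G,A\cup B)$ for $G=H\wr K$, following the nested-stack construction used in~\cite[Sections~7,8]{HR} for the analogous closure property of stack groups. Fix an inverse-closed generating set $B$ of $H$ with a special \cspda $\cM_H$ for $\coWP(H,B)$, and an inverse-closed generating set $A$ of $K$; by Theorem~\ref{prop:clfin} we are free to choose $A$, and we take it adapted to a Muller--Schupp normal form of $K$, as in the proof of Theorem~\ref{prop:vfree}, so that the deterministic pushdown automaton for $\WP(K,A)$ tracks a ``cursor'' element of $K$ on its stack. (Alternatively one could first reduce to the case $K$ free, using that $(H^n)\wr F$ embeds with finite index in $H\wr K$ whenever $F\le K$ is free of finite index $n$.) A generator $a\in A$ acts on $G$ as $(1,a)$ and a generator $b\in B$ acts as the function supported at $1_K$ with value $b$, so reading an input word $w$ left to right moves the cursor $\kappa\in K$ on the $A$-letters and builds the finitely supported memory function $f_w\colon K\to H$, each $B$-letter being appended to the $H$-word currently recorded at the cursor. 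Only finitely many positions are ever visited, and $w\neq_G 1$ if and only if either $\pi_K(w)\neq 1_K$, where $\pi_K(w)$ is the product of the $A$-letters of $w$, or the $H$-word $f_w(\kappa^\ast)$ recorded at some position $\kappa^\ast\in K$ lies in $\coWP(H,B)$. It therefore suffices to build a \cspda that non-deterministically verifies one of these two disjuncts.

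In its initialisation the \cspda first guesses which disjunct to pursue. For ``$\pi_K(w)\neq 1_K$'' it pads the check stack and then, exactly as in the proof of Theorem~\ref{prop:vfree}, runs on its pushdown the deterministic automaton for $\WP(K,A)$ applied to the $A$-letters of the input (ignoring all $B$-letters), accepting precisely when the cursor is not $1_K$ after the whole input has been read. For the other disjunct it also guesses, during initialisation, the address $u^\ast$ of a position $\kappa^\ast$ of $K$, a valid check-stack content $D^\ast$ for $\cM_H$, and sufficiently many padding symbols, placed in disjoint regions of the check stack with $D^\ast$ in the region that the pushdown of $\cM_H$ will occupy. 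In the second stage it copies $u^\ast$ onto the pushdown, and thereafter records (on top of the pushdown, with tagged symbols, together with a bounded amount of finite-state information) the position of the current cursor relative to $\kappa^\ast$, in such a way that the pushdown part of this data is empty exactly when $\kappa=\kappa^\ast$; reading an $A$-letter updates the data by pushing or popping a bounded number of symbols. Whenever $\kappa=\kappa^\ast$, so that the top of the pushdown is the top of the pushdown of $\cM_H$ sitting over the pre-loaded $D^\ast$, an incoming $B$-letter is fed to an embedded simulation of the second stage of $\cM_H$, which is run up to its next reading state before control returns (possible by Property~(0) for $\cM_H$); while $\kappa\neq\kappa^\ast$, $B$-letters are ignored. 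The machine accepts if the embedded copy of $\cM_H$ is in an accepting state once the whole input has been read.

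Correctness then reduces to verifying that the constructed machine is a \cspda — so that $\cL(\cM)$ is an \E language — and that $\cL(\cM)=\coWP(G,A\cup B)$. The first branch accepts exactly the words with $\pi_K(w)\neq 1_K$. In the second branch, an accepting run with guessed $\kappa^\ast$ certifies that $f_w(\kappa^\ast)\in\cL(\cM_H)=\coWP(H,B)$, hence that $w\neq_G 1$; conversely, if $f_w(\kappa^\ast)\neq_H 1$ for some visited position $\kappa^\ast$, then guessing that $\kappa^\ast$, the check-stack content of an accepting run of $\cM_H$ on $f_w(\kappa^\ast)$, and enough padding makes the machine accept. It follows that $G$ is a \ce group.

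I expect the main obstacle to be exactly the point glossed over above: a \cspda has a single pushdown stack and a single check stack, so the pushdown must simultaneously carry the pushdown of the embedded automaton $\cM_H$ and the data recording the cursor relative to $\kappa^\ast$, while the check stack must also accommodate the check-stack content of $\cM_H$. Getting this layering right — so that, when $\kappa=\kappa^\ast$, the head reads precisely the check content of $\cM_H$, and so that excursions of the cursor away from and back to $\kappa^\ast$ disturb neither the parked configuration of $\cM_H$ nor the pre-loaded regions of the check stack — is where the padding symbols are essential and where the argument must follow the stack bookkeeping of~\cite[Sections~7,8]{HR} closely. A secondary point is to carry all of this out for virtually free rather than merely free $K$, which is precisely what the Muller--Schupp choice of $A$ is for.
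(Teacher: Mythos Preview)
Your proposal is correct and follows essentially the same strategy as the paper: non-deterministically select a coordinate $\kappa^\ast\in K$, layer the check stack so that an initialisation of $\cM_H$ sits below the normal-form word for $\kappa^\ast$ (plus padding), and then run the Muller--Schupp pushdown automaton for $K$ in the upper region while simulating $\cM_H$ in the lower region whenever the cursor hits $\kappa^\ast$. The stack bookkeeping you flag as the main obstacle is handled exactly as you anticipate, by having the read write head descend through padding to reach the $\cM_H$ region and re-ascend afterwards, with the boundary detected from the change of alphabet on the check stack.

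The one substantive difference is that the paper does more than the theorem asserts: it builds a single \emph{special} \cspda for $G$ (so $G$ is in fact a \sce-group), which forces it to combine your two branches into one machine whose accept states are all reading states other than the entry state, and to add an extra ``return to entry configuration'' mechanism so that Property~(3) holds. Your explicit two-branch non-determinism (first decide whether to test $\pi_K(w)\neq 1_K$ or to test a chosen coordinate) is perfectly adequate for the stated conclusion that $G$ is \ce, and is slightly simpler precisely because you are not chasing Properties~(1) and~(3); but it does not yield the stronger \sce conclusion that the paper's construction gives.
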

\begin{proof} This is similar to the proof of the last part
of~\cite[Proposition~9]{HR}.
Let $H = \langle A_H \rangle$ be a \sce group and $\cM_H$ a special \cspda with
$\cL(\cM_H) = \coWP(H,A_H)$.
Let $K$ be finitely generated and virtually free. As in the proof of
Theorem~\ref{prop:vfree} above,
we can choose a generating set $A_K = B \cup T^{\pm 1}$ of $K$ in which
$B$ is the inverse closure of a free generating set of a free subgroup $F$ of
finite index in $K$ and $T$ is a right transversal of $F$ in $K$ with
the element in $F$ removed, and then every element of $K$ can be written
uniquely as a word $wt$ with $w$ a freely reduced word in $B^*$ and
$t \in T \cup \{\emptystring\}$. As we saw in the proof of
Theorem~\ref{prop:vfree}, the word problem $\WP(K,A_K)$ is the language of
a pushdown automaton $\cP_K$ that works by storing the normal form word $wt$
of the prefix of the input word read so far, and accepts when the stack is
empty.

Let $G$ be the restricted wreath product $H \wr K$.
Then $G = \langle A \rangle$ with $A = A_H \cup A_K$. (As usual, we
assume that $A_H$ and $A_K$ are disjoint.) We shall
describe a special \cspda $\cM_G$ with $\cL(\cM_G) = \coWP(G,A)$.
As in~\cite[Proposition~9]{HR} (and earlier results on closure under
restricted wreath product), it works by non-deterministically choosing a
component of the base group $H^K$ of $G$, and using $\cM_H$ to attempt to prove
the non-triviality of that component of an element of the base group.
A word will be accepted by $\cM_G$ if and only if either the input word does
not project onto the identity element of $K$, or $\cM_H$ accepts the word
representing the chosen component of the base group.

In this construction we set the bottom of stack symbol $\bs_G$ of $\cM_G$ to
be the bottom of stack symbol $\bs_H$ of $\cM_H$.
The initialisation of $\cM_G$ consists of an initialisation of $\cM_H$,
followed by the writing of an arbitrary normal form word $wt \in A_K^*$ on the
check stack (with $t$ on top), on top of the initialised stack for $\cM_H$.
An arbitrary number of padding symbols $\ps_G$ followed by
$\ts_G$ are then added to the check stack to help ensure that
Condition (P3) is satisfied.
This word $wt$ defines the component of the base group that we shall check
for non-triviality using the initialisation of $\cM_H$. The current state
of $\cM_H$ is stored at all times as a component of the state of $\cM_G$.

After writing $\bs_G$ at the bottom of the pushdown stack
and reading the first input symbol $x$, 
the read write head % of $\cM_G$
ascends to the top of the 
substack of the $\cM_G$ check stack that is 
the check stack of $\cM_H$ (while pushing padding
symbols $\ps_G$ to the pushdown stack) 
and then copies the word $wt$
from the check stack to the pushdown stack.

At this point, and also when processing subsequent input symbols $x$ when
the read write head is at the top of the pushdown stack, % of $\cP_K$
if $x \in A_K$, then $x$ is passed to $\cP_K$ for processing using its
pushdown stack (which is the upper part of the pushdown stack of $\cM_G$).
If $x \in A_H$ and the pushdown stack of $\cP_K$ is non-empty, then $x$ is
ignored by $\cM_G$.

If, however, the pushdown stack of $\cP_K$ is empty and an input symbol
$x \in A_H$ is read, then the read write head descends while removing padding
symbols $\ps_G$ from the pushdown stack, and $x$ is processed by $\cM_H$
using its current stored state. After $\cM_H$ has processed $x$, the read write
head moves back up (while putting symbols $\ps_G$ on the pushdown stack).

In any case, after processing the input letter as just described, before
entering another reading state, we first check whether the \cspda should be in
an entry configuration, which is the case if and only if
\begin{mylist}
\item[(i)] the normal form word for the element of $K$ stored at the top of the
pushdown stack is equal to $wt$; and
\item[(ii)] the \cspda $\cM_H$ is in an entry configuration.
\end{mylist}
To see whether the first of these conditions hold, we first check whether the
read write head is  
pointing at the top symbol $t$ of $wt$ (or to the
top letter in $B$ or to $\ts_H$ if $t = \emptystring$): if not, then
the two words have different lengths, and the condition does not hold.
If so, then $\cM_G$ checks the two words for equality by popping symbols
from the pushdown stack as long as they are equal to the corresponding symbols
on the check stack. If at any time the two symbols seen are not equal, then the
words are unequal, and any popped symbols are replaced on the pushdown stack. 
If the two words are equal, then we also check whether we are in an entry
configuration of $\cM_H$ by checking whether it is in an entry state and, if so,
checking whether the pushdown stack of $\cM_H$ %is empty
contains only padding symbols above $\bs_H$,
 by popping padding
symbols $\ps_G$. If we are not in an entry configuration of $\cM_H$,
then again we reconstitute all of the previous contents of the pushdown
stack of $\cM_G$. But if we are in an entry configuration of $\cM_H$, then
$\cM_G$ transitions into  the corresponding entry configuration of $\cM_G$.
This process is necessary for $\cM_G$ to ignore the word problem of $G$.

At this stage we are in a reading state, either in an entry configuration of
$\cM_G$, or with the read write head at the top of the pushdown stack of
$\cM_H$, and we read the next input symbol if any.
If we are in an entry configuration, then we proceed as we did for
the first input letter read. We are then in any case at the the top of the
pushdown stack, and we proceed as before.

Following (P7), the accept states of $\cM_G$ are all reading states other than
the entry states. So all words that do not project onto $1_K$ are accepted
together with all words for which the selected component in the base group has
been accepted by $\cM_H$ as a word in the coword problem of $H$.
It is now straightforward to verify that the \cspda that we have described is
special. Property (P6) follows from the assumption that $\cM_H$ has this
property.
\end{proof}

\begin{theorem}\label{prop:clfp}
The class of \sce groups is closed under taking free products with finitely
many free factors.
\end{theorem}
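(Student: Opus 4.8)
The plan is to reduce immediately to the case of a free product $G = G_1 * G_2$ of two special co-\E groups, since the general case of finitely many free factors follows by induction, using the fact (to be checked) that a free product of a special co-\E group with one of the already-constructed free-product automata is again handled by the same construction. For each factor $G_i = \langle A_i \rangle$, fix a special \cspda $\cM_i$ with $\cL(\cM_i) = \coWP(G_i, A_i)$, and set $A = A_1 \cup A_2$ (disjoint), so $G = \langle A \rangle$. A word $w \in A^*$ fails to represent the identity of $G$ precisely when, after cancelling any maximal syllables that lie in the word problem of the relevant factor, the resulting alternating product of nontrivial syllables is nonempty. So the \cspda $\cM_G$ must non-deterministically guess where the surviving syllables are, run $\cM_1$ or $\cM_2$ on each guessed-nontrivial syllable (as a factor), and verify that at least one syllable survives --- or, more simply, guess one particular syllable to prove nontrivial and then verify that everything to its left collapses.

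The key steps, in order, would be as follows. First, describe the initialisation: $\cM_G$ stacks up the check-stack initialisations of both $\cM_1$ and $\cM_2$ (together with separator and padding symbols $\ps$), so that the single check stack simultaneously carries a valid check stack for each factor automaton; the machine records in its finite state which ``level'' of the composite check stack is currently active. Second, during the deterministic-looking second stage, $\cM_G$ reads the input syllable by syllable; on entering a new syllable in $A_i$ it passes control to (a copy of) $\cM_2$, using the $\cM_i$-portion of the check stack and the top portion of the pushdown stack, exactly as in the wreath-product proof of Theorem~\ref{prop:clwp}. Third --- and this is where Lemma~\ref{lem:nonfail} is needed --- when a syllable in $A_i$ turns out to represent the identity of $G_i$ (i.e.\ $\cM_i$ returns to an entry configuration), $\cM_G$ must \emph{cancel} it: it has to restore the configuration it was in before that syllable began and merge the two now-adjacent syllables if they lie in the same factor. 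To guarantee that running $\cM_i$ on such a syllable does not cause a hard failure (which Property (3) permits but which would be fatal here, since we need to continue the computation), we choose, via Lemma~\ref{lem:nonfail}, an entry configuration of each $\cM_i$ whose behaviour does not fail on any short word; the length bound $n$ can be taken to be the maximum input-syllable length we need to handle before a checkpoint, which we arrange to be bounded by interleaving the cancellation checks with the reading. Fourth, maintain on the pushdown stack a stack-of-stacks encoding of the current reduced word: each surviving syllable contributes a block recording the $\cM_i$-pushdown-stack contents for that syllable plus a marker for $i$; pushing a new syllable pushes a new block, and cancelling a syllable pops a block and possibly reactivates $\cM_i$ on the block below to continue reading in the same factor. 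Fifth, the accepting states are exactly the reading states reached when the block-stack is nonempty and represents a genuinely reduced nonempty alternating product; as in the earlier proofs, by Property (3) we may take all non-entry reading states to be accepting, and $\cM_G$ ends in its entry configuration precisely when the whole input reduces to the identity.

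The main obstacle will be step three: organising the cancellation of identity-syllables deterministically and in a way that keeps the whole \cspda \emph{deterministic upon input} (Property (1)) while still allowing the nondeterministic guess of the overall bracketing. The resolution, following \cite{HR}, is that the only genuine nondeterminism lives in the initialisation --- the guess of how much check stack (hence how deep a block-stack) to allocate and, if one uses the ``single chosen syllable'' variant, which syllable to certify --- after which the second-stage behaviour is forced: reading a syllable, running the relevant $\cM_i$, and on its return either keeping the block (if $\cM_i$ is in an accepting state) or popping it and re-merging (if $\cM_i$ is back in its entry state). One must check carefully that the book-keeping to ``re-merge'' two syllables of the same factor after an intermediate cancellation can be done using only the pushdown stack plus finite memory --- this amounts to re-entering $\cM_i$ in the configuration recorded in the block immediately below the popped one and continuing to feed it the next syllable's letters --- and that the read-write head movements up and down the composite check stack respect Property (2) (never moving above the top), which is why the generous padding with $\ps$ symbols is inserted at initialisation. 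Finally, verifying Property (3) for $\cM_G$ reduces to Property (3) for each $\cM_i$ together with the observation that a subword $v \notin \coWP(G,A)$, i.e.\ $v \in \WP(G,A)$, reduces syllable-wise to the empty word and hence leaves the block-stack and state of $\cM_G$ unchanged --- exactly the conclusion ``$C^{uv} = C^u$'' required, with no failure, thanks to the Lemma~\ref{lem:nonfail} choice of entry configurations.
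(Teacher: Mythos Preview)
Your overall architecture is the paper's: reduce to $G=H*K$, stack sub-computations of the factor automata on the pushdown while a sequence of ``check substacks'' sits on the check stack, move \emph{up} to a fresh substack when a letter from the other factor is read, and drop \emph{down} (resuming the suspended sub-computation) when a syllable turns out to be trivial. That is exactly what the paper does, and your Property~(3) verification by syllable-wise reduction is also the paper's argument.

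There are, however, two genuine gaps in your sketch that would stop the proof from going through as written.

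\textbf{The initialisation must be an alternating sequence of arbitrary length, not ``both $\cM_1$ and $\cM_2$''.} In a \cspda the read--write head's position on the check stack is \emph{tied} to the height of the pushdown stack. So if you want a stack of blocks on the pushdown (one per surviving syllable), each block must sit opposite its \emph{own} check substack. With only one substack per factor your second block would already be above the check stack and Property~(2) would force failure. The paper's fix is to write an arbitrary string $\alpha_1\beta_1\cdots\alpha_k\beta_k$ to the check stack in Phase~1, where each $\alpha_i$ (resp.\ $\beta_i$) is an \emph{independent} initialisation of $\cM_H$ (resp.\ $\cM_K$) followed by its resulting entry state; the number $k$ and the individual substacks are all guessed nondeterministically. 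Your later remark about ``how deep a block-stack to allocate'' suggests you sense this, but your description of the initialisation does not provide it.

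\textbf{Lemma~\ref{lem:nonfail} is used in the correctness argument, not in the construction, and there is no a~priori bound on syllable length.} You propose to ``choose an entry configuration of each $\cM_i$ whose behaviour does not fail on any short word'', with $n$ the ``maximum input-syllable length''. But syllables are unbounded, so no single entry configuration works for all inputs. The paper instead allows \emph{every} initialisation of each check substack (this is the nondeterminism), and then, \emph{given} a word $w\in\coWP(G,A)$, uses Lemma~\ref{lem:nonfail} with $n=|w|$ to show that some sequence of substacks accepts $w$ without any sub-computation failing; this is an induction on the number of components of $w$. Relatedly, your final paragraph invokes the lemma in the verification of Property~(3), but it is not needed there: Property~(3) explicitly permits $C^{uv}$ to fail, and the paper simply assumes it does not fail and then deduces $C^{uv}=C^u$ from Property~(3) of $\cM_H$ and $\cM_K$ plus the syllable reduction.
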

\begin{proof} It is enough to prove that the free product $G = H*K$ of
two \sce groups $H$ and $K$ is a \sce group. Let $\cM_H$ and $\cM_K$ be
special \cspda accepting $\coWP(H,A_H)$ and $\coWP(K,A_K)$ respectively.
We shall define a special \cspda $\cM_G$ accepting $\coWP(G,A)$ with
$A = A_H \cup A_K$ (which we assume to be a disjoint union).
The \cspda $\cM_G$ includes disjoint copies of $\cM_H$ and $\cM_K$.
Beyond the single initial state of $\cM_G$, each other state of $\cM_G$ is 
naturally associated with a state of either $\cM_H$ or $\cM_K$.
However, since $\cM_G$ needs to do some additional
processing while remembering its current state of $\cM_H$ or $\cM_K$,
that current state will be stored only as the first component of a state of
$\cM_G$, which will have a second component containing further information.
We denote by $\Delta_H$ and $E_H$ the stack alphabet and set of entry states for
$\cM_H$, and by $\Delta_K$ and $E_K$ the stack alphabet and set of entry states
for $\cM_K$.

In the first phase (i.e.\ the initialisation) of $\cM_G$ an arbitrary string
$\alpha_1\beta_1,\cdots \alpha_k\beta_k$ for some $k \geq 1$ is put onto the
check stack (with $\alpha_1$ at the bottom), with substrings $\alpha_i$ and
$\beta_i$ over $\Delta_H \cup E_H$ and $\Delta_K \cup E_K$ respectively,
which are all non-empty for $i=1,\ldots,k$, except possibly for
$\alpha_1$ and/or $\beta_k$ (since the string could start or end with a
substring over either $\Delta_H \cup E_H$ or $\Delta_K \cup E_K$).
With one exception, each non-empty string $\alpha_i$ or $\beta_i$ consists of
a string over $\Delta_H$ or $\Delta_K$ that is valid for the initialisation of
$\cM_H$ or $\cM_K$ followed by the entry state of $\cM_H$ or $\cM_K$ that
results from that initialisation. The exception is that, for the first
non-empty string (which may be $\alpha_1$ or $\beta_1$) at the bottom of the
check stack, we put the bottom of stack marker $\bs_G$ in place of
$\bs_H$ or $\bs_K$ at the bottom. This ensures that Property (P3)
is satisfied by $\cM_G$.

Ignoring the above exception and the symbols for the entry states of the
components, we see that the check stack of $\cM_G$ that we create during this
initialisation is a sequence of \emph{check substacks}
that are initialised stacks of $\cM_H$ and $\cM_K$.
We let $q_\ent$ be the entry state of $\cM_H$ or $\cM_K$ that is at the top of 
the lowest of those check substacks (corresponding to the first non-empty
substack).
After the check stack has been set, the read write head of $\cM_G$ descends to
the bottom of the pair of stacks, the symbol $\bs_G$ is written on the
pushdown stack, and the entry state of $\cM_G$ is set to $(q_\ent,q_\ent)$.

At all times during the second phase, exactly one of the subgroups $H$ and $K$
will be the \emph{current subgroup}. When the current subgroup is $H$, the
\cspda $\cM_G$ will have a state of $\cM_H$ as its first component,
and its read write head will be scanning a substack that is a check stack of $\cM_H$,
which we call the \emph{current check stack}, and similarly for $K$.
The general idea is that, when the current subgroup is $H$
and input symbols from $A_H$ are being read, the \cspda behaves in the same
way as $\cM_H$, using the current check substack. We call such a
sequence of moves of $\cM_G$ a \emph{sub-computation} in $\cM_H$ or in $\cM_K$.
As we shall see later, the \cspda needs to remember the entry state of the
current sub-computation, and we can store this as the second component
of the current state. At the start of the second stage, the current subgroup is
$H$ or $K$, depending on whether the first check substack contains $\alpha_1$
or $\beta_1$.

We need to describe how $\cM_G$ behaves when it reads a letter $x$ from the
input tape. Suppose first that the current subgroup is $H$. 
Now suppose also that $x \in A_K$, so $x$ is not in the current subgroup.
In this case, $\cM_G$ moves to the top
of the current check substack while pushing padding symbols $\ps_G$ onto
the pushdown stack. At the top of this check substack, it records the current
state of $\cM_H$ on the pushdown stack (in the same position as the current
entry state is stored on the check substack). If there are no further
check substacks above the current one, then the initialisation stage of
$\cM_G$ was too short and $\cM_G$ fails; we say that the initialisation was
\emph{inadequate}.  Otherwise, the read write head moves up into the next check
substack, which will be for the group $K$. The read write head first moves
to the top of this check substack (while pushing padding symbols $\ps_G$
onto the pushdown stack) to retrieve the entry state $q_\ent$ of this
initialisation of $\cM_K$. Then it descends again to the bottom of this check
substack (popping from the pushdown stack as it does so), pushes $\bs_K$
onto the pushdown stack, and then behaves as though $\cM_K$
had read the input symbol $x$ while in entry state $q_\ent$. It changes to the
state $(q,q_\ent)$, where $q$ is the state into which a sub-computation on
$\cM_K$ would transition from $q_\ent$, given input $x$ and the current stack
symbols, and pushes to the pushdown stack, accordingly.

Otherwise we have $x \in A_H$. As explained earlier, $\cM_G$ now behaves in the
same way as $\cM_H$, using the currently scanned check substack as
its check stack. Where $q_\ent$ is the entry state of this sub-computation,
and $q$ is the state that $\cM_H$ should enter after reading $x$,
$\cM_G$ will now enter the state $(q,q_\ent)$.

So far we have assumed that the current subgroup is $H$.
If the current subgroup is $K$ then the machine operates in just the same way,
but with the roles of $H$ and $K$ in the description above reversed. 

Suppose that, after completing a reading move and any subsequent non-reading
moves of a sub-computation in the subgroup $H$, the \cspda is in the entry
configuration of the current sub-computation within $\cM_H$ but not in the
entry configuration of $\cM_G$. Note that this situation arises if and only if
the current state is $(q_\ent,q_\ent)$, where $q_\ent$ is the entry state of
the current computation, and the current symbol seen on the check stack is the
bottom of stack marker $\bs_H$ (and not $\bs_G$).
Then the current subgroup changes from $H$ to $K$, and the read write head
descends into the check substack immediately below its current position, which
will be for the group $K$. This lower check substack has recorded the
configuration of a sub-computation within $\cM_K$ that had been interrupted
when a letter from $A_H$ was read. The read write head retrieves the entry
state and the state of that sub-computation at the point of interruption from
the tops of the check substack and pushdown stack, respectively, and then it
descends and removes any padding symbols $\ps_G$ from the top of the
pushdown stack.  After doing that, the configurations of both $\cM_G$ and of
the sub-computation in $\cM_K$ are the same as they were at the point of
interruption. Note that, if the next input letter is from $A_H$ then $\cM_G$
immediately goes up again and resumes the current sub-computation in $\cM_H$,
but this temporary change of current subgroup and descent into the lower check
substack is necessary to ensure that $\cM_G$ ignores the word problem of $G$.

Reversing the roles of $H$ and $K$ in the paragraph above deals with the case
where the \cspda enters an entry configuration of the current sub-computation
within $\cM_K$ but is not in an entry configuration of $\cM_G$.
Following (P7), the accepting states of $\cM_G$ are all reading states other
than the entry states. That completes the description of the \cspda $\cM_G$. 

We shall show next that, so long as $\cM_H$ and $\cM_K$ are special then so is
$\cM_G$. It should be clear that $\cM_G$ satisfies the additional properties
(P1)--(P5) and (P7). It remains to verify Property (P6), that $\cM_G$ ignores
the word problem of $G$. So let $C$ be an entry configuration of $\cM_G$ and
let $uv \in A^*$ with $v =_G 1$, and assume also that $C^{uv}$ does not fail.
We need to show that $C^u=C^{uv}$.

Suppose first that the letters of $v$ are all in the same alphabet $A_H$ or
$A_K$ - we may assume without loss that $v \in  A_H^*$. Then $v =_H 1$, and
the claim follows essentially from the assumption that $\cM_H$ ignores the word
problem of $H$, because $\cM_G$ is carrying out a sub-computation in $\cM_H$
while reading $v$. There are some complications however. 

If the current subgroup of the configuration $C^u$ is $K$ then, on reading the
first letter of $v$, the current subgroup changes to $H$. Then, after reading
$v$, $\cM_H$ will be in an entry configuration of $\cM_H$, and the read write
head will descend into the check substack of $\cM_K$, retrieving the reading
state $M_K$ was left at and possibly removing some padding symbols,
resulting in $C^{uv}=C^u$.

It is also possible that, after reading a letter $x$ of $v$ that is not the
final letter of $v$, the current sub-computation within $\cM_H$ is in an entry
configuration of $\cM_H$, in which case (as we have explained above) the stack
read write head of $\cM_G$ descends into the check substack below. Since $x$
is not the last letter of $v$, then on reading the following letter of $v$,
the read write head moves back up and the sub-computation resumes in $\cM_H$.

If $v$ contains letters from both $A_H$ and $A_K$, then there must be
some non-trivial subword $v'$ of $v$ that contains letters only from
$A_H$ or $A_K$ with $v' =_H 1$ or $v' =_K 1$ and then the claim follows from
the previous paragraphs and induction on the number of \emph{components} of $v$
(where a component is a maximal non-empty subword of $v$ within either
$A_H^*$ or $A_K^*$).

It remains to prove that $\cL(\cM_G) = \coWP(G,A)$. 
Since the entry states of $\cM_G$ are not accepting states, and $\cM_G$ ignores
the word problem of $G$, no word $w \in A^*$ with $w =_G 1$ can be
accepted by $\cM_G$, and so we have $\cL(\cM_G) \subseteq \coWP(G,A)$.

Conversely, suppose that $w \in \coWP(G,A)$, and write $w = w_1 \cdots  w_k$
as a sequence of components. We shall prove by induction on the number $k$ of
components that $w \in \cL(\cM_G)$. In fact we shall use
Lemma~\ref{lem:nonfail} to prove the stronger statement that, for any
$n \in \Z$ with $n>0$, there is an entry configuration $C$ of $\cM_G$ such that
\begin{mylist}
\item[(i)] $C^w$ is an accept configuration of $\cM_G$; and
\item[(ii)] for each entry configuration $C_i$ of $\cM_H$ or $\cM_K$ that is
defined by one of the check substacks in the initialisation of $\cM_G$ that
results in $C$, and any word $v$ over $A_H$ or $A_K$ of length at most $n$,
$C_i^v$ does not fail.
\end{mylist}

If $k=1$ then $w$ is a word over $A_H$ or $A_K$ in either $\coWP(H,A)$ or
$\coWP(K,A)$, and so $\cM_G$ operates as either $\cM_H$ or $\cM_K$,
whose languages are both within $\cL(\cM_G)$. The stronger statement in
the preceding paragraph follows from Lemma~\ref{lem:nonfail} applied to
$\cM_H$ or $\cM_K$. So suppose that $k>1$.

If each component of $w$ lies in either $\coWP(H,A_H)$ or $\coWP(K,A_K)$,
then each lies in either $\cL(\cM_H)$ or $\cL(\cM_K)$, and there are
entry configurations of $\cM_H$ and of $\cM_K$ that result in these components
being accepted and also, by Lemma~\ref{lem:nonfail}, do not fail on input of
any word over $A_H$ or $A_K$ of length at most $n$.
By initialising $\cM_G$ with the corresponding sequence of initialisations of
$\cM_H$ and of $\cM_K$, we ensure that each component of $w$ is processed by a
sub-computation in a separate substack and, after reading $w$, the \cspda
$\cM_G$ is in the accept state arising from the sub-computation of $w_k$.
So $w \in \cL(\cM_G)$ with the conditions of the stronger assertion satisfied. 

Otherwise choose $i$ such that the component $w_i$ of $w$ lies within the word
problem of one of $H$ or $K$, and define
$u\coloneqq w_1\cdots w_{i-1}$, $u'\coloneqq w_{i+1}\cdots w_k$, and
$v\coloneqq w_i$; if $i=1$ or $i=k$ then $u$ or $u'$ is defined to be empty.
Since, by assumption, we have $w = uvu' \in \coWP(G,A)$ and $v =_G 1$, we have
$uu' \in \coWP(G,A)$ and $v \in \WP(G,A)$. By inductive hypothesis,
$uu'$, which has at most $k-1$ components, is within $\cL(\cM_G)$,
so there is an entry configuration $C$ of $\cM_G$ for which $C^{uu'}$ is an
accept configuration. Furthermore, for any $n>0$, we can choose $C$ to
satisfy Condition (ii) above, and in any case we choose it to satisfy this
condition with $n$ equal to the length of $w$.

We claim that we can choose $C$ such that $C^{uv}$ does not fail and such that
$C$ satisfies Condition (ii) above. By Condition (ii) applied to our
initial choice of $C$ such that $C^{uu'}$ is an accept state, the only way that
$C^{uv}$ could fail would be if $C$ was an inadequate initialisation for reading
$C^{uv}$, in which case we would require a single extra check substack to
read the subword $v$ of $uv$. So we simply replace $C$ by the same
configuration but with the required additional substack on top that satisfies
Condition (ii) for the chosen $n$ assumed to be at least equal to the length of
$w$. (We can assume that $H$ and $K$ are both non-trivial, and so $\cL(\cM_H)$
and $\cL(\cM_K)$ are non-empty, and we can apply Lemma~\ref{lem:nonfail} to
choose this new substack.)

Now, since $\cM_G$ ignores the word problem (i.e.\ we have Property (P6))
and $C^{uv}$ does not fail, we have $C^w = C^{uvu'} = C^{uu'}$ is an accept
configuration, and $C$ satisfies Condition (ii) above as required.
\end{proof}

Raad Al Kohli,\\
Mathematics Institute, University of Warwick, Coventry CV4 7AL,\\
{\tt Raad.Al-Kohli@warwick.ac.uk}

Derek F Holt, \\
Mathematics Institute, University of Warwick, Coventry CV4 7AL,\\
{\tt derekholt127@gmail.com}

Sarah Rees, \\
School of Mathematics, Statistics and Physics, Newcastle University, Newcastle NE1 7RU,
{\tt Sarah.Rees@newcastle.ac.uk}
\end{document}